
\documentclass[12pt]{amsart}
\pdfoutput=1
\usepackage{latexsym, amsmath, amssymb, amsthm, bm}
\usepackage{tikz}
\usetikzlibrary{arrows,shapes,positioning,calc,patterns}
\usepackage[mathscr]{eucal}
\usepackage[colorlinks=true,linkcolor=blue,urlcolor=blue, citecolor=blue]%
  {hyperref}
\usepackage[shortalphabetic]{amsrefs}
\usepackage[T1]{fonten c}
\usepackage{mathptmx}
\usepackage{microtype}
\usepackage[all]{xypic,xy}

\usepackage[centering, includeheadfoot, hmargin=1in, vmargin=1in,
  headheight=30.4pt]{geometry}
\newtheorem{lemma}{Lemma}[section]
\newtheorem{theorem}[lemma]{Theorem}

\newtheorem{proposition}[lemma]{Proposition}

\newtheorem{definition}[lemma]{Definition}

\theoremstyle{definition}
\newtheorem{remark}[lemma]{Remark}

\newtheorem{example}[lemma]{Example}


  
\renewcommand{\theequation}%
{\arabic{section}.\arabic{lemma}.\arabic{equation}}


\newcommand{\kk}{\ensuremath{\Bbbk}} 

\newcommand{\CC}{\ensuremath{\mathbb{C}}}

\newcommand{\PP}{\ensuremath{\mathbb{P}}} 
 
\newcommand{\RR}{\ensuremath{\mathbb{R}}} 
 
\newcommand{\ZZ}{\ensuremath{\mathbb{Z}}} 
\newcommand{\sE}{\ensuremath{\mathcal{E}}} 
 
\newcommand{\sI}{\ensuremath{\mathcal{I}}}

\newcommand{\sO}{\ensuremath{\mathcal{O}}} 
 
\newcommand{\sP}{\ensuremath{\mathcal{P}}} 

\newcommand{\Osc}{\ensuremath{\mathbf{Osc}}} 
\renewcommand{\geq}{\geqslant}
\renewcommand{\leq}{\leqslant}

\DeclareMathOperator{\conv}{Conv}

\DeclareMathOperator{\Pic}{Pic}
\DeclareMathOperator{\rk}{rk}

\DeclareMathOperator{\Sym}{Sym}
\DeclareMathOperator{\Gr}{Gr}

\usepackage{enumerate}
\usepackage{caption}
\DeclareMathOperator{\im}{Im}
\DeclareMathOperator{\T}{T}
\newcommand\mono{\hookrightarrow}

\newcommand\rat{\dashrightarrow}

\begin{document}

\title[ Higher order Gauss maps]{A note on higher order Gauss maps}

\author[S.~Di~Rocco]{Sandra Di Rocco}
\address{Sandra Di Rocco\\ Department of Mathematics\\ Royal Institute of
  Technology (KTH)\\ 10044 Stockholm\\ Sweden}
\email{\href{mailto:dirocco@kth.se}{dirocco@kth.se}}
\urladdr{\href{http://www.math.kth.se/~dirocco}%
  {www.math.kth.se/~dirocco}}

\author[K.~Jabbusch]{Kelly Jabbusch}
\address{Kelly Jabbusch\\ Department of Mathematics \& Statistics\\ University
  of Michigan--Dearborn \\ 4901 Evergreen Road \\ Dearborn, Michigan
  48128-2406\\ USA}
\email{\href{mailto:jabbusch@umich.edu}{jabbusch@umich.edu}}
\urladdr{\href{http://www-personal.umd.umich.edu/~jabbusch}%
  {http://www-personal.umd.umich.edu/~jabbusch}} 

\author[A.~Lundman]{Anders Lundman}
\address{Anders Lundman\\ Department of Mathematics\\ Royal Institute of
  Technology (KTH)\\ 10044 Stockholm\\ Sweden}
\email{\href{mailto:alundman@kth.se}{alundman@kth.se}}
\urladdr{\href{http://www.math.kth.se/~alundman}%
{www.math.kth.se/~alundman}}

\subjclass[2010]{14M25, 14J60}

\begin{abstract}
We study Gauss maps of order $k$, associated to a projective variety $X$ embedded in projective space via a line bundle $L.$  We show that if $X$ is a smooth, complete complex variety and $L$ is a  $k$-jet spanned line bundle on $X$, with $k\geq 1,$ then the Gauss map of order $k$ has finite fibers, unless $X=\PP^n$ is embedded by the Veronese embedding of order $k$.  In the case where $X$ is a toric variety, we give a combinatorial description of the Gauss maps of order $k$, its image and the general fibers.  
\end{abstract}

\maketitle
\section{Introduction}

Let $X\subset\PP^N$  be an $n$-dimensional irreducible, nondegenerate projective variety defined over an algebraically closed field $\kk$ of characteristic $0$.  The (classical) Gauss map is the rational morphism
$\gamma :X\dashrightarrow \Gr(n,N)$ that assigns to a smooth point $x$ the projective tangent space of $X$ at $x$, $\gamma(x)={\mathbb T}_{X,x}\cong\PP^n.$ 
It is known that the general fiber of $\gamma$ is a linear subspace of $\PP^N$, and  that the morphism is finite and birational if $X$ is smooth unless $X$ is all of $\PP^N$, \cite{Zak93,KP91, GH79}. 

In \cite{Zak93}, Zak defines a generalization of the  above definition as follows. For $n\leq m\leq N-1$, let $\Gr(m,N)$ be the Grassmanian variety of $m$-dimensional linear subspaces in $\PP^N$,  and define $\sP_m=\overline{\{(x,\alpha)\in X_{sm}\times \Gr(m,N) | {\mathbb T}_{X,x}\subseteq L_\alpha\}},$ where $L_\alpha$ is the linear subspace corresponding to  $\alpha \in \Gr(m,N)$ and the bar denotes the Zariski closure in $X \times \Gr(m,N)$. The {\bf $m$-th Gauss map} is the projection $\gamma_m: \sP_m\to \Gr(m,N)$.  When $m=n$ we recover the classical Gauss map, $\gamma_n=\gamma.$ These generalized Gauss maps still enjoy the property that a general fiber is a linear subspace, \cite[2.3 (c)]{Zak93}. Moreover a general fiber is always finite if $X$ is smooth and $n\leq m\leq N-n+1,$  \cite[2.3 (b)]{Zak93}.

In this paper we consider a different generalization of the Gauss map where, instead of  higher dimensional linear spaces tangent at a point,
we use linear spaces tangent to higher order, namely the osculating spaces. The osculating space of order $k$ of $X$ at a smooth  point $x\in X_{sm},  \Osc_x^k,$ is a linear subspace of $\PP^N$ of  dimension $d_k,$ where $n\leq d_k\leq {n+k\choose n},$ see Definition \ref{def:osc}.  We can then define a rational map
 $\gamma^k:X\dashrightarrow \Gr(d_k-1,N)$ that assigns to a point  $x$ the $k$-th osculating  space of $X$ at $x$, $\gamma^k(x)=\Osc^k_x,$ where $d_k$ is the general $k$-th osculating dimension, see Definition \ref{def:gauss}. Notice that when $k=1$, we recover the classical Gauss map, $\gamma^1=\gamma_n=\gamma.$ We call $\gamma^k$ the {\bf Gauss map of order k}.
This definition was originally introduced in \cite{CA22} and later studied in 
\cite{Pohl} under the name of associated maps. Higher order Gauss maps have subsequently been studied in connection to higher fundamental forms in \cite{L94} and \cite{DI15}, while Gauss maps of order $2$ have been investigated in \cite{FI01}. 

For the classical Gauss map $\gamma$ the linearity of a general fiber is a consequence of the reflexivity property of the projective dual variety of $X.$ Higher order tangency has also been used to generalize the notion of duality and define the higher order dual varieties, $X^k,$ see \cite[Ch 2]{P81}. Unfortunately $X^k$ does not always enjoy reflexivity properties, even if $X$ is nonsingular, as pointed out in \cite[Prop. 1]{P81}. It is therefore reasonable, as  also indicated by the main result in \cite{FI01}, not to expect linearity of the general fiber of $\gamma_k,$ even when the variety is non-singular. Example \ref{non smooth} provides a toric and singular example of a Gauss map with non-linear fibers.

We concentrate instead on establishing a generalization of the finiteness of the Gauss maps of order $k$ when the variety is nonsingular (a property that, as remarked above,  does not always hold for Zak's Gauss maps). First we generalize the classical picture by requiring  the Gauss maps to be regular  when the variety $X$ is nonsingular and thus we consider $k$-jet spanned embeddings, see Definition \ref{def:kjet}, for which $d_k= {n+k\choose n}$ at all points. 
We remark that $k$-jet spanned embeddings  have been extensively studied and classified, see for example \cite{BDRSz, BaSz}. 

The use of certain techniques from projective geometry imposes the assumption of $\kk=\CC.$ We sternly think though that the results in this paper should be extendable to any field of characteristic zero. Theorem \ref{finite} shows:

 \begin{theorem} Let $i: X\hookrightarrow \PP^N$ be a $k$-jet spanned embedding of a nonsingular complex variety. Then 
the Gauss maps $\gamma^s$ are finite for all $s\leq k$, unless $X=\PP^n$ is  embedded by the Veronese embedding of order $k.$ \end{theorem}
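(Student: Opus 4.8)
The plan is the following. Since a $k$-jet spanned bundle is $s$-jet spanned for every $s\le k$ (Definition~\ref{def:kjet}) and every $k$-jet spanned bundle on $\PP^n$ is $\mathcal{O}(d)$ with $d\ge k$, it suffices to prove: \emph{if $L$ is $s$-jet spanned and $X$ nonsingular, then $\gamma^s$ is finite unless $X=\PP^n$ and $L=\mathcal{O}(s)$.} Granting this, if some $\gamma^s$ with $s\le k$ fails to be finite, then $X=\PP^n$ and $L=\mathcal{O}(s)$; combined with $L=\mathcal{O}(d)$, $d\ge k\ge s$, this forces $d=s=k$, i.e. $i$ is the Veronese embedding of order $k$, the asserted exception. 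Note that $s$-jet spannedness makes $d_s=\binom{n+s}{n}$ everywhere (Definition~\ref{def:gauss}), so $\gamma^s$ is a morphism on all of $X$; hence ``finite'' is equivalent to ``generically finite'', and it is enough to show that a general fiber of $\gamma^s$ is $0$-dimensional, equivalently that $d\gamma^s$ is generically injective --- something one computes in terms of the $(s+1)$-st fundamental form, as in \cite{L94,DI15}.

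Suppose then that $\gamma^s$ has a positive-dimensional general fiber $F\ni x$. By the definition of $\gamma^s$ the osculating space $\Osc^s_y=\Lambda$ is a fixed $m$-plane with $m=\binom{n+s}{n}-1$ for every $y\in F$, so $F$ is precisely the locus along which $X$ has order-$s$ contact with $\Lambda$; in particular $\T_{X,y}\subseteq\Osc^1_y\subseteq\Lambda$ for all $y\in F$. The first ingredient is \emph{Zak's theorem on tangencies} \cite{Zak93} --- the place where nonsingularity of $X$ is essential --- which yields $\dim F\le m-n=\binom{n+s}{n}-1-n$. The crux is to improve this to $\dim F=0$ outside the Veronese case. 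For this I would exploit the full order-$s$ tangency rather than just the tangency of $\T_{X,y}$: every hyperplane $H\supseteq\Lambda$ meets $X$ in a divisor of multiplicity $\ge s+1$ along $F$, while such hyperplanes form a linear system of dimension $N-\binom{n+s}{n}$. Feeding this into a dimension estimate for the incidence variety $\{(y,H):\Osc^s_y\subseteq H\}$ and its image in the $(s+1)$-st dual variety $X^{s+1}\subseteq(\PP^N)^{\vee}$, one should conclude that a positive-dimensional $F$ can occur only in the extreme case where no proper hyperplane contains $\Lambda$, i.e. $\Lambda=\PP^N$ and $\gamma^s$ is the constant map.

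It then remains to analyze the case $\Osc^s_x=\PP^N$ for general (hence every) $x$. Here $h^0(X,L)=\dim\Osc^s_x+1=N+1=\binom{n+s}{n}$, which is the rank of the bundle of $s$-jets $\mathcal{P}^s_X(L)$; since $L$ is $s$-jet spanned, the evaluation $H^0(X,L)\otimes\mathcal{O}_X\to\mathcal{P}^s_X(L)$ is a surjection of vector bundles of equal rank, hence an isomorphism, so $\mathcal{P}^s_X(L)$ is trivial. Equivalently, all fundamental forms of $X$ up to order $s$ are isomorphisms onto $\Sym^{\bullet}\T_{X,x}$; either way --- via the resulting Chern class identities, the graded pieces of $\mathcal{P}^s_X(L)$ being $\Sym^j\Omega_X\otimes L$, or via the known classification of varieties with maximal osculating behaviour --- this forces $X=\PP^n$, and then $h^0(\mathcal{O}(d))=\binom{n+s}{n}$ gives $d=s$, i.e. $L=\mathcal{O}(s)$. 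The main obstacle is the middle paragraph: passing from the Zak bound $\dim F\le\binom{n+s}{n}-1-n$ to $\dim F=0$. One cannot simply copy the classical argument that the general fiber of $\gamma$ is a linear space, because (as noted in the introduction) the higher dual varieties need not be reflexive even when $X$ is smooth; the estimate on the order-$s$ contact loci has to be carried out directly.
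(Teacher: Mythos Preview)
Your endgame --- once you know $\Osc^s_x=\PP^N$ for all $x$, the jet bundle $J_s(L)$ is trivial and one invokes the classification \cite{DRS01,FKPT85} --- matches the paper's. The problem is getting there.

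The middle paragraph is not a proof but a hope. Zak's tangency theorem gives only $\dim F\le\binom{n+s}{n}-1-n$, which is far from zero for $s\ge 2$. You then propose to close the gap via a dimension count on the incidence variety $I=\{(y,H):\Osc^s_y\subseteq H\}$, but you do not carry it out, and I do not see how it can work: the projection $I\to(\PP^N)^\vee$ has image the higher-order dual $X^{s}$, whose dimension is not controlled in any useful way without exactly the kind of reflexivity that you yourself note can fail for $s\ge 2$. Knowing that hyperplanes through $\Lambda$ cut $X$ to order $s+1$ along $F$ gives no contradiction unless you already know something like $\dim X^{s}=N-1$, which is precisely what is at stake. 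You flag this step as ``the main obstacle'' and leave it there; as written, the argument does not go through.

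There is also a smaller error: for a morphism of complete varieties, ``finite'' is \emph{not} equivalent to ``generically finite'' (any blow-up is a counterexample), so even if your fiber argument succeeded it would only give generic finiteness.

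The paper sidesteps the projective-geometric difficulties entirely by an adjunction/Mori-theoretic argument. Since $J_s(L)$ is globally generated, $\det J_s(L)$ is nef; by the formula $\det J_s(L)=\frac{1}{n+1}\binom{n+s}{s}(sK_X+(n+1)L)$, either $sK_X+(n+1)L$ is ample (and then $\gamma^s$ is finite, since $pl\circ\gamma^s$ is defined by sections of an ample line bundle), or $\tau(L)=(n+1)/s$. In the latter case an extremal rational curve $R$ in the contracted face satisfies $(sK_X+(n+1)L)\cdot R=0$; combining $-K_X\cdot R\le n+1$ with $L\cdot R\ge s$ (Lemma~\ref{restr}) forces equality in both, so Cho--Miyaoka--Shepherd-Barron gives $\Pic(X)=\ZZ$ and $-K_X$ ample, hence $sK_X+(n+1)L\cong\sO_X$, hence $\det J_s(L)\cong\sO_X$, hence $J_s(L)$ trivial (Lemma~\ref{trivaldet}), and one is in your endgame. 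This route never touches Zak tangencies or dual varieties, and it yields honest finiteness rather than generic finiteness.
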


Section \ref{sec:toric} is dedicated to giving a combinatorial description of the maps $\gamma^k$ and the images $\gamma^k(X),$ called the $k$-th osculating variety, in the case when $X$ is a toric variety.
In \cite[Theorem 1.1]{FI14} it is shown that if $X_A$ is a toric variety (not necessarily smooth) given by a finite lattice set $A$, then the tangential variety $\overline{\gamma(X)}$ is projectively equivalent to a toric variety $X_B$ where $B$ is obtained by taking appropriate sums of elements in $A$. Theorem~\ref{thm:Bklattice} is a direct generalization of this result and the ideas in the proof. 

\begin{theorem} If $X_A$ is a toric variety given by a set of lattice points $A$ and the embedding is generically $k$-jet spanned, then
there exists a finite set of lattice points $B_k$ and a lattice projection $\pi$ such that $\overline{\gamma^k(X)}$ is projectively equivalent to $X_{B_k}$ and the closure of the irreducible components of the fiber of $\gamma_k$ are projectively equivalent to $X_{\pi(A)}.$
\end{theorem}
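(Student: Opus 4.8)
The plan is to follow the proof of \cite[Theorem 1.1]{FI14}, reading everything off the monomial parametrization and systematically replacing the tangent space by the $k$-th osculating space. Write $A=\{a_0,\dots,a_N\}\subset M\cong\ZZ^n$, let $T\cong(\kk^*)^n$ be the torus with character lattice $M$, and let $\phi\colon T\to\PP^N$, $t\mapsto[t^{a_0}:\cdots:t^{a_N}]$, be the parametrization of $X_A$, so that $T$ acts on $\PP^N$ through the characters $a_0,\dots,a_N$ and $\phi$ is $T$-equivariant. Since the embedding is generically $k$-jet spanned, the general $k$-th osculating dimension is $d_k=\binom{n+k}{n}$; the locus where this dimension is attained is open and $T$-invariant, so it contains the whole dense orbit, and hence $\Osc^k_{\phi(1)}$ is a $(d_k-1)$-plane in $\PP^N$ whose linear span $V\subseteq\kk^{N+1}$ has dimension $d_k$. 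By $T$-equivariance of osculating spaces, $\Osc^k_{\phi(t)}=t\cdot V$ for every $t\in T$, so $\overline{\gamma^k(X)}$ is the closure of the $T$-orbit of $V$ in $\Gr(d_k-1,N)$.

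First I would compute $V$ explicitly. Substituting $t_j=1+s_j$ and differentiating $t^{a_i}=\prod_j(1+s_j)^{a_{ij}}$ at $s=0$ shows that $V$ is the row span of the matrix $D$ with $D_{\alpha,i}=\prod_j\binom{a_{ij}}{\alpha_j}$, where $\alpha$ ranges over exponent vectors with $|\alpha|\le k$. The key observation is that $\{\prod_j\binom{x_j}{\alpha_j}\}_{|\alpha|\le k}$ is a basis of the space $\kk[x_1,\dots,x_n]_{\le k}$ of polynomials of degree at most $k$, since the top-degree part of $\prod_j\binom{x_j}{\alpha_j}$ is $x^\alpha/\alpha!$; hence the $i$-th column of $D$ records the values at $a_i$ of the polynomials in this basis. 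Consequently, for a $d_k$-subset $I\subseteq\{0,\dots,N\}$, the Plücker coordinate $p_I(V)$ is nonzero if and only if the evaluation map $\kk[x]_{\le k}\to\kk^{I}$, $f\mapsto(f(a_i))_{i\in I}$, is an isomorphism, that is, if and only if the lattice points $\{a_i\}_{i\in I}$ impose independent conditions on polynomials of degree $\le k$; such subsets are exactly the bases of a linear matroid $\mathcal M_k$ on $A$ (for $k=1$ this is the affine matroid of $A$). Now $T$ acts on $\bigwedge^{d_k}\kk^{N+1}$ by scaling the $I$-th Plücker coordinate by the character $b_I:=\sum_{i\in I}a_i$, so composing $\gamma^k$ with the Plücker embedding yields the monomial map $\phi(t)\mapsto(t^{b_I}\,p_I(V))_I$. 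Setting $B_k:=\{\,b_I : I\text{ a basis of }\mathcal M_k\,\}$, the image of this map is, after rescaling the coordinates by the nonzero scalars $p_I(V)$, exactly $X_{B_k}$; as the Plücker embedding is a closed immersion, $\overline{\gamma^k(X)}$ is projectively equivalent to $X_{B_k}$, which is the first assertion.

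For the fibers, $\gamma^k(\phi(t))=\gamma^k(\phi(t'))$ if and only if $t\cdot V=t'\cdot V$ in $\Gr(d_k-1,N)$, equivalently $(t^{-1}t')^{b_I-b_{I'}}=1$ for all bases $I,I'$; hence the fiber through $\phi(t_0)$ is the closure of $\phi(t_0H)$, where $H=\{t\in T : t^{b_I-b_{I'}}=1\text{ for all bases }I,I'\}$. Let $\Lambda\subseteq M$ be the sublattice generated by the differences $b_I-b_{I'}$ and let $\Lambda^{\mathrm{sat}}$ be its saturation; then $H^\circ$ is a subtorus of dimension $n-\rk\Lambda$ with character lattice $M/\Lambda^{\mathrm{sat}}$, and $H$ is a disjoint union of $|(M/\Lambda)_{\mathrm{tors}}|$ cosets of $H^\circ$. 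Taking $\pi\colon M\to M/\Lambda^{\mathrm{sat}}$ to be the quotient lattice projection, the restriction of $\phi$ to any coset of $H^\circ$ is, up to translation by an element of $T$, the monomial map $h\mapsto(h^{\pi(a_i)})_i$, whose image closure is projectively equivalent to $X_{\pi(A)}$. Therefore the closure of each irreducible component of a general fiber of $\gamma^k$ is projectively equivalent to $X_{\pi(A)}$, which is the second assertion. Using the basis-exchange axiom for $\mathcal M_k$ one can moreover check that $\Lambda$ is already generated by the differences $a_i-a_j$ over pairs $\{i,j\}$ exchangeable in a common basis, which makes $\pi$ fully explicit.

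I expect the main obstacle to be the matroid description, namely the equivalence $p_I(V)\neq0\iff\{a_i\}_{i\in I}$ impose independent conditions on degree-$k$ polynomials: this is where the $k$-jet hypothesis is genuinely used (it guarantees $\rk D=d_k$, so that $\gamma^k$ really maps to $\Gr(d_k-1,N)$ and $\mathcal M_k$ has rank $d_k$), and it is the step that replaces the straightforward linear algebra of the tangent-space case in \cite{FI14}. The remaining work is lattice-theoretic bookkeeping for the fibers: passing to the saturation $\Lambda^{\mathrm{sat}}$ in order to isolate a single irreducible component, identifying the character lattice of $H^\circ$, and checking that over a general point the fiber of $\gamma^k$ introduces no further components outside the dense orbit.
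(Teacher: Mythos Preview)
Your proposal is correct and follows essentially the same strategy as the paper: use torus equivariance to reduce to the osculating space at $\phi(1)$, compose with the Pl\"ucker embedding to obtain a monomial map, read off $B_k$ from the non-vanishing Pl\"ucker coordinates, and describe the general fibre via the quotient lattice $M/(\langle B_k-B_k\rangle_\RR\cap M)$ (your $\Lambda^{\mathrm{sat}}$). The paper compresses all this into the commutative diagram and defers the fibre computation to \cite[Lemmas~2.1,~2.2]{FI14}, whereas you spell out the matrix $D$, the polynomial-interpolation interpretation of $p_I(V)\neq 0$, and the subgroup $H$ directly; your matroid description of $B_k$ is a pleasant reformulation of the paper's condition $\det\big[J_{k,(1,\dots,1)}^{\{u_{i_1},\dots,u_{i_q}\}}\big]\neq 0$, but not a different idea.
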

%
This description allows us to reprove our finiteness result in the toric setting using combinatorial methods. A simple consequence of the combinatorial proof of finiteness is that the Gauss map of order $k\geq 1$ is birational for smooth $k$-jet spanned toric embeddings, when finite. 

This is false outside the toric category, the Gauss maps of order $k\geq 2$ need not in general be birational when finite, see \cite{FI01}.

We remark that the assumption of $k$-spannedness cannot be relaxed in general. In particular Example~\ref{genkspanned} is a generically $2$-jet spanned smooth surface with positive dimensional fibers under the Gauss map of order $2$.

Theorem~\ref{thm:Bklattice} also makes it possible to compute the image and general fiber of the Gauss map of order $k$ in the toric setting. This is implemented in the Package \textsf{LatticePolytopes}, \cite{LP} for \textsf{Macaulay2}, \cite{M2}.

\subsection*{Future directions and applications} 
The Gauss map is used to define the (normalized) Nash Blow up of a variety. The Gauss map of higher order could lead to a definition
of ``Higher order Nash blow ups". For toric varieties interesting results have been proved for example in \cite{A11}. It is reasonable to expect that a generalization would lead to new resolution properties, at least in the toric category.
Moreover it is worth mentioning that Gauss maps play an important role in characterizing the boundaries of amoebas and in connection with real $A$-discriminants, see for example \cite{K91}. A generalization, using higher order Gauss maps, could yield interesting applications within real algebraic geometry.  We plan to investigate both  directions mentioned above in future work.

\subsection*{Conventions} 
 We work over the field of complex numbers $\CC.$ 
Throughout the paper, $X$ denotes a smooth, complete complex algebraic variety of dimension $n.$ We use additive notation for the group operation in ${\rm Pic}(X).$

\subsection*{Acknowledgements}
The first and third author were partially supported by the VR grants [NT:2010-5563, NT:2014-4763].
The second author was partially supported by the G\"oran Gustafsson foundation. We are thankful to the referees for useful suggestions.
\section{Definitions and background}
\subsection{Restrictions imposed by ample divisors.} In this section we collect the necessary background on  the invariants of ample Cartier divisors used in the proof of the main result. 

Let $L$ be an ample line bundle on $X.$ The {\it nef-value} of $L$ is defined as 
$$\tau(L)=min_\RR\{t\,|\, K_X+tL \text{ is nef }\}.$$
Kawamata's Rationality Theorem  shows that $\tau(L)$ is in fact a rational number. Let $X\to \PP^M$ be the morphism defined by the global sections of an appropriate multiple of $K_X+\tau L$ and let 
$\psi\circ\phi_\tau$ be its Remmert-Stein factorization. The map $\phi_\tau:X\to Y$ has connected fibers and it is called the {\it nef-value morphism}. See \cite[1.5]{BS95} for more details.

An easy way to compute $\tau(L)$ is provided by the following lemma.
\begin{lemma}\label{nefnotample}\cite[1.5.5]{BS95}
Let $L$ be an ample line bundle on $X$ and $\tau\in\RR.$  Then $\tau=\tau(L)$ if and only if
$K_X+\tau L$ is nef but not ample.
\end{lemma}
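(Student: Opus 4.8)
The plan is to deduce the statement purely from the geometry of the cones $\mathrm{Nef}(X)$ and $\mathrm{Amp}(X)$ inside the finite-dimensional space $N^1(X)_\RR$, using three standard facts: $\mathrm{Nef}(X)$ is a closed convex cone, $\mathrm{Amp}(X)$ is its interior (hence open), and the sum of a nef class and an ample class is again ample. First I would record that the set $S=\{t\in\RR : K_X+tL \text{ is nef}\}$ is a closed ray $[\tau(L),\infty)$: it is nonempty because $K_X+tL$ is ample, hence nef, for $t\gg 0$ (as $L$ is ample); it is ``upward closed'' because for $t_0\in S$ and $t>t_0$ one has $K_X+tL=(K_X+t_0L)+(t-t_0)L$, a nef class plus an ample class, hence ample; and it is closed because $t\mapsto[K_X]+t[L]$ is an affine-linear, hence continuous, map into $N^1(X)_\RR$ and $\mathrm{Nef}(X)$ is closed. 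In particular the minimum defining $\tau(L)$ is attained, so $K_X+\tau(L)L$ is nef.

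For the forward implication, suppose $\tau=\tau(L)$. Then $K_X+\tau L$ is nef by the above. If it were also ample, then, since $\mathrm{Amp}(X)$ is open and $t\mapsto[K_X]+t[L]$ is continuous, $K_X+(\tau-\varepsilon)L$ would be ample — in particular nef — for all sufficiently small $\varepsilon>0$, contradicting the minimality of $\tau(L)$. Hence $K_X+\tau L$ is nef but not ample.

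For the converse, suppose $K_X+\tau L$ is nef but not ample. Since it is nef, $\tau\in S$, so $\tau\geq\tau(L)$. If $\tau>\tau(L)$, then $K_X+\tau L=(K_X+\tau(L)L)+(\tau-\tau(L))L$ is a nef class plus an ample class, hence ample, contradicting the hypothesis; therefore $\tau=\tau(L)$.

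I do not expect a substantive obstacle: the lemma is a formal consequence of the cone picture for numerical divisor classes, and (unlike the rationality of $\tau(L)$, which needs Kawamata's theorem) requires no deep input. The only step that is not a one-line manipulation is the openness of $\mathrm{Amp}(X)$, equivalently the fact that ampleness depends only on the numerical class, which one quotes from Kleiman's criterion; everything else follows from closedness of $\mathrm{Nef}(X)$ and ``nef plus ample is ample''.
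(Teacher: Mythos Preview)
Your argument is correct and is the standard cone-theoretic justification of this fact. Note, however, that the paper does not actually prove this lemma: it merely quotes it as \cite[1.5.5]{BS95}, so there is no ``paper's own proof'' to compare against. Your write-up supplies exactly the kind of proof one finds in Beltrametti--Sommese: identify $S=\{t:K_X+tL\text{ nef}\}$ as the closed half-line $[\tau(L),\infty)$ using closedness of $\mathrm{Nef}(X)$ and the rule ``nef $+$ ample $=$ ample'', and then use openness of $\mathrm{Amp}(X)$ (Kleiman) to see that $K_X+\tau(L)L$ cannot be ample. Nothing is missing.
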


The nef-value morphism always contracts curves on the variety $X$ (since the defining line bundle is not ample). To state this more precisely, let $\overline{NE(X)}$ be the closure of the  cone generated by the effective $1$-cycles on $X.$

\begin{lemma}\label{face}\cite[4.2.13 (1)]{BS95}
Let $L$ be an ample line bundle on $X.$ Then the nef-value morphism $\phi_\tau$ is the contraction of an extremal face $F_H$ of  $\overline{NE(X)},$ where $H=K_X+\tau L$ and $F_H=H^\perp\cap(\overline{NE(X)}\setminus \{0\}).$
\end{lemma}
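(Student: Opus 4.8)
\emph{Proof proposal.} I would deduce this from the base-point-free theorem together with Kleiman's ampleness criterion, essentially by unwinding what the term ``nef-value morphism'' means. Set $H=K_X+\tau L$; since $\tau\in\QQ$ by Kawamata's Rationality Theorem, $mH$ is an integral Cartier divisor for a suitable $m\in\ZZ_{>0}$, and by Lemma~\ref{nefnotample} this $H$ is nef but not ample. The first step is to record that $H$ is in fact semiample --- this is exactly the content of the base-point-free theorem here, and it is already built into the construction of the nef-value morphism recalled above: for $m$ sufficiently divisible $|mH|$ is base-point-free, and its Remmert--Stein factorization yields the nef-value morphism $\phi_\tau: X\to Y$, surjective with connected fibers onto a \emph{normal} projective variety $Y$, together with an \emph{ample} line bundle $A$ on $Y$ satisfying $mH=\phi_\tau^{*}A$.

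Next I would identify the curves contracted by $\phi_\tau$. For an irreducible curve $C\subset X$ the projection formula gives $m\,(H\cdot C)=(\phi_\tau^{*}A)\cdot C=A\cdot(\phi_\tau)_{*}C$, which is $\geq 0$ because $A$ is ample and which equals $0$ precisely when the effective cycle $(\phi_\tau)_{*}C$ vanishes --- that is, precisely when $\phi_\tau(C)$ is a point. Hence $\phi_\tau$ contracts exactly those irreducible curves $C$ with $H\cdot C=0$, equivalently those whose numerical class lies in $F_H=H^{\perp}\cap\bigl(\overline{NE(X)}\setminus\{0\}\bigr)$.

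Finally I would check that $F_H$ is a nontrivial extremal face of $\overline{NE(X)}$. Because $H$ is nef, the linear functional $z\mapsto H\cdot z$ is nonnegative on $\overline{NE(X)}$, so it is a supporting functional and $H^{\perp}\cap\overline{NE(X)}$ is a face; it is extremal in the sense required of a contraction because $z=z_1+z_2$ with $z_1,z_2\in\overline{NE(X)}$ and $H\cdot z=0$ forces $H\cdot z_1+H\cdot z_2=0$ with both summands $\geq 0$, hence $z_1,z_2\in F_H$. And $F_H\neq\{0\}$: otherwise $H$ would be strictly positive on $\overline{NE(X)}\setminus\{0\}$ and therefore ample by Kleiman's criterion, contradicting Lemma~\ref{nefnotample}. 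Assembling the three steps, $\phi_\tau$ is a surjection with connected fibers onto a normal variety that contracts exactly the curves whose class lies in $F_H$; that is, $\phi_\tau$ is the contraction of the extremal face $F_H$.

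The proof has no real obstacle of its own: once one grants the base-point-free theorem --- the single genuinely hard input, here concealed inside the definition of the nef-value morphism --- the remainder is the standard dictionary between semiample divisors, the morphisms they induce, and faces of the Mori cone, plus Kleiman's criterion. The point I would be most careful about is passing through the Remmert--Stein (Stein) factorization before naming $Y$ and $A$: this is what guarantees that $Y$ is normal and $A$ honestly ample, and hence what promotes the implication ``$C$ contracted $\Rightarrow H\cdot C=0$'' to the equivalence used above.
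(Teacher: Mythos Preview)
The paper does not supply its own proof of this lemma; it is quoted verbatim as a citation to \cite[4.2.13 (1)]{BS95}, so there is no argument to compare against. Your proof is correct and is the standard one: the base-point-free theorem (hidden in the definition of $\phi_\tau$) gives semiampleness of $mH$, Stein factorization produces a normal target $Y$ with $mH=\phi_\tau^*A$ for $A$ ample, the projection formula identifies the contracted curves with those of class in $H^\perp$, nefness of $H$ makes $H^\perp\cap\overline{NE(X)}$ a face, and Kleiman's criterion forces that face to be nontrivial since $H$ is not ample.
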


Finally we recall a useful classification of Fano varieties, 
based on the length of extremal rays. 
Let $R\in\overline{NE(X)}$ be an extremal ray. The length of $R$ is defined as $l(R)=min\{-K_X\cdot C\,|\, C\text{ is a  rational curve and }[C]\in R\}.$ The cone theorem implies that $0< l(R)\leq n+1.$

\begin{proposition}\cite[6.3.12,]{BS95},\cite{CM02}\label{characterization}
Let $C$ be an extremal rational curve on $X.$ If $-K_X\cdot C=n+1$, then $-K_X$ is ample and $\Pic(X)\cong\ZZ.$
\end{proposition}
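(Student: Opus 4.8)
Proof proposal for Proposition \ref{characterization} (the statement: if $C$ is an extremal rational curve on $X$ with $-K_X \cdot C = n+1$, then $-K_X$ is ample and $\Pic(X) \cong \ZZ$).

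The plan is to invoke the structure theory of extremal contractions together with the bound on lengths of extremal rays coming from the Cone Theorem. First I would recall that by the Cone Theorem every extremal ray $R$ of $\overline{NE(X)}$ is spanned by a rational curve $C$ with $0 < -K_X \cdot C \leq n+1$, and that the extreme case $-K_X \cdot C = n+1$ is the maximal possible length. Let $R = \RR_{\geq 0}[C]$ be the ray spanned by our given curve and let $\phi_R \colon X \to Y$ be the associated extremal contraction (Mori contraction). The key input is Wiśniewski's inequality (the ``length of extremal rays'' estimate used by Beltrametti--Sommese and by Cho--Miyaoka): for any fiber $F$ of $\phi_R$ one has $\dim F \geq l(R) - 1 = n$. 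Hence $\phi_R$ has a fiber of dimension at least $n = \dim X$, which forces the fiber to be all of $X$, so $Y$ is a point and $\phi_R$ is the contraction to a point.

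Next I would extract the consequences. Since $\phi_R$ contracts $X$ to a point, $\rho(X) = 1$, i.e.\ $\Pic(X) \otimes \QQ$ is one-dimensional; combined with the fact that $X$ is smooth (so $\Pic(X)$ is torsion-free, being a subgroup of $H^2(X,\ZZ)$ modulo torsion — or more directly because $\Pic(X) \hookrightarrow H^2(X,\ZZ)$ and any torsion class would be numerically trivial, contradicting $\rho = 1$ together with ampleness below), this gives $\Pic(X) \cong \ZZ$. Moreover, an extremal contraction to a point is by definition a Fano contraction with $-K_X$ $\phi_R$-ample; but $\phi_R$ being constant means $-K_X$ is ample outright. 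Alternatively, once $\rho(X) = 1$, the ray $R$ is the whole cone $\overline{NE(X)}$, and $-K_X \cdot C > 0$ on the generator shows $-K_X$ lies in the interior of the nef cone, hence is ample by Kleiman's criterion.

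The main obstacle — or rather the one nontrivial ingredient that must be cited rather than reproved here — is Wiśniewski's fiber-dimension inequality $\dim F \geq l(R) + \dim Y - \dim X$ for extremal contractions, equivalently $\dim F \geq l(R) - 1$ for a contraction to a point; this is precisely the content referenced in \cite{BS95} (around 6.3) and \cite{CM02}. Everything else is formal: the torsion-freeness of $\Pic$ for a smooth projective variety, Kleiman's ampleness criterion, and the observation that a fiber of dimension $\geq n$ inside an $n$-dimensional variety is the whole space. I would also remark that this proposition is exactly the ``boundary case'' companion to the Kobayashi--Ochiai / Cho--Miyaoka--Shepherd-Barron characterization of projective space (the case where additionally $-K_X = (n+1)H$ for the ample generator $H$ forces $X \cong \PP^n$), but for the present statement only the weaker conclusion $\Pic(X) \cong \ZZ$ with $-K_X$ ample is needed, and no appeal to that finer classification is required.
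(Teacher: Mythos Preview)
The paper gives no proof of this proposition: it is quoted from \cite[6.3.12]{BS95} and \cite{CM02} as background and then used as a black box in the proof of Theorem~\ref{finite}. So there is nothing to compare against on the paper's side.

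Your sketch is a faithful outline of the argument that lies behind those citations --- the Ionescu--Wi\'sniewski inequality $\dim E + \dim F \geq \dim X + l(R) - 1$ (with $E$ the exceptional locus of the contraction of $R$) applied with $l(R)=n+1$ forces $\dim E = \dim F = n$, so the contraction is of fiber type with target a point, whence $\rho(X)=1$ and $-K_X$ is ample. Two small corrections. First, the form of the inequality you wrote, $\dim F \geq l(R) + \dim Y - \dim X$, is not the correct statement and is not even internally consistent with your ``equivalently $\dim F \geq l(R)-1$ for a contraction to a point''; you need the version involving the exceptional locus $E$, which is also what rules out the birational case (there $\dim E \leq n-1$ would force $\dim F \geq n+1$, a contradiction). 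Second, you tacitly pass from $-K_X\cdot C = n+1$ to $l(R)=n+1$; this is legitimate because in \cite{BS95} an ``extremal rational curve'' is by convention one realizing the length of its ray, but it is worth making explicit.
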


\subsection{Osculating spaces.}
Let $L$ be a line bundle on $X$ and let $V=H^0(X,L).$  The coherent sheaf $J_k(L)={p_1}_*(p_2^*(L)\otimes{\mathcal O}_{X\times X}/\sI_{\Delta}^{k+1}
),$ where $\Delta\subset X\times X$ is the diagonal and $p_i$ are the projection maps $p_i: X\times X\to X,$ is locally free of rank ${n+k\choose n}$, and is called the \emph{$k$-jet bundle of $L$}. The fiber at a point $x\in X$ can be identified with $J_k(L)_x\cong H^0(X, L\otimes \sO_X/\mathfrak m_x^{k+1}),$ where $\mathfrak m_x$ is the maximal ideal at $x.$ The quotient map $$j_{k,x}: V\to H^0(X, L\otimes \sO_X/\mathfrak m_x^{k+1})$$ evaluating  a global section and its derivatives of order at most $k$ at the point $x:$ $$j_{k,x}(s)=(s(x),\ldots, \frac{\partial^t s }{\partial {\mathbf x}^t}(x),\ldots)_{1\leq t \leq k}$$ for a coordinate system ${\mathbf x}=(x_1,\ldots,x_n),$ extends to a vector bundle map: $j_k: V\otimes\sO_X\to J_k(L).$
We denote by $U_k\subset X$  the open locus where the vector bundle map $j_k$ obtains its maximal rank $d_k\leq {n+k\choose k}.$ Moreover if $s_0,\dots,s_m$ is a basis for $V$ then the rank of the map $j_k$ at a point $x\in X$ equals the rank of \emph{the matrix of $k$-jets} which is defined as $[J_{k,x}]=[j_k(s_0)|\ldots | j_k(s_m)]$. Notice that the vector space map $j_{k,x}$ induces and inclusion of projective spaces $\PP(j_{k,x}(V))\hookrightarrow \PP(V).$

\begin{definition}\label{def:osc}
The projectivization of the image $\PP(j_{k,x}(V))=\Osc_x^k\subseteq\PP(V)$ is called the {\bf $k$-th osculating space} at $x.$
The integer $d_k$ is the {\bf general osculating dimension }of $L$ on $X.$ If $U_k=X$ the integer $d_k$ is called the  {\bf $k$-th osculating dimension }of $L$ on $X.$ 
\end{definition}
The line bundles for which the $k$-th osculating dimension is maximal define embeddings with high geometrical constraints. These are the embeddings that we will consider in the remainder of the article.
\begin{definition}\label{def:kjet}
Let $L$ be a line bundle on $X$ and let $d_k$ be its general $k$-th osculating dimension.  If $d_k={n+k\choose k}$ then the (rational) map defined by the global sections of $L$ is said to be {\bf generically $k$-jet spanned}. If $d_k={n+k\choose k}$ is the $k$-th osculating dimension then the map is said to be  {\bf  $k$-jet spanned}. 
  \end{definition}

\begin{remark}
Observe that $0$-jet spanned is equivalent to being globally generated. 
Moreover, if a line bundle $L$ is $k$-jet spanned then it is $s$-jet spanned for all $s\leq k$. 
\end{remark}
\begin{remark}
We note here that if $X$ is a smooth algebraic variety and $L$ is a very-ample line bundle then $L^N$ is  k-jet spanned for all $N\geq k,$ see
\cite{BeSo}. Bounds on $N$ for when a multiple $L^N$ of (just) an ample  line bundle $L$ is $k$-jet spanned have been investigated for many classes of varieties, as K3 surfaces \cite{BDRSz} or Abelian varieties \cite{BaSz}. More generally $k$-jet spanned embeddings have been studied for many classes of varieties, see for example \cite{BeSo, Te, BeDRSo}.
\end{remark}

\begin{example}
As a first example, we see that $(X,L)=(\PP^n,\sO_{\PP^n}(k))$ is $k$-jet spanned.  Indeed, a basis of global sections of $\sO_{\PP^n}(k)$ is given by all the degree $k$ monomials in $x_0, \ldots, x_n$.  Thus, the maximal rank of $j_{k,x}$, at any $x \in \PP^n$, is $d_k ={n+ k\choose k}$.  Note that $\sO_{\PP^n}(k)$ is not $l$-jet spanned for $l>k$.  
\end{example}

In the next example we distinguish between $k$-jet spanned and generically $k$-jet spanned.  
\begin{example}
Let $p:X\to\PP^2$ be the blow up of $\PP^2$ at three non-collinear points $p_1, p_2$ and $p_3$ and let $L=-K_X=p^*(\sO_{\PP^2}(3))-E_1-E_2-E_3,$ where the $E_i$ are the exceptional divisors.  Let $l_{ij}$ be the lines in $\PP^2$ connecting $p_i$ and $p_j$ for $1 \leq i < j \leq 3$, and denote by $\tilde{l}_{ij}$ the proper transform.  If $x \in X$ is a point that is not in any exceptional divisor nor in any $\tilde{l}_{ij}$, then the rank of $j_{2,x}$ is $6$; if $x$ lies on the intersection of an exceptional divisor and $\tilde{l}_{ij}$, then the rank of $j_{2,x}$ is $4$; and for any other $x \in X$ the rank of $j_{2,x}$ is $5$ \cite[Theorem 2.1]{LM01}.  Thus for $x$ not in any exceptional divisor nor in any $\tilde{l}_{ij}$, $L$ is $2$-jet spanned at $x$, and hence the embedding defined by $L$ is generically $2$-jet spanned.  However, if $x \in E_i$ or $x \in \tilde{l}_{ij}$, then $L$ is not $2$-jet spanned at $x$, and thus the embedding defined by $L$ is not $2$-jet spanned.  
\end{example}

The generation of $k$-jets imposes strong conditions on intersections with irreducible curves on $X.$
\begin{lemma}\label{restr}
Let $L$ be a $k$-jet spanned line bundle on $X$ and let $C\subset X$ be an irreducible curve. Then
\begin{enumerate}
\item $L\cdot C\geq k;$
\item $L\cdot C=k$ if and only if $C\cong\PP^1$ and $L$ is not $(k+1)$-jet spanned.
\end{enumerate}
\end{lemma}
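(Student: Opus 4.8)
The plan is to reduce to $\PP^1$ by restricting jets along the curve. Let $C \subset X$ be an irreducible curve and let $\nu: \widetilde{C} \to C \subset X$ be the normalization, so $\widetilde{C}$ is a smooth complete curve. The key point is that the $k$-jet spannedness of $L$ on $X$ forces the pullback $\nu^*L$ to be $k$-jet spanned on $\widetilde{C}$: given a smooth point $x$ of $C$ and a local coordinate on $\widetilde{C}$ at the preimage point, the sections of $L$ on $X$ already separate $k$-jets at $x$ in all tangent directions, in particular along the (smooth) branch of $C$, so their restrictions to $\widetilde{C}$ span $J_k(\nu^*L)$ at a general point of $\widetilde{C}$; one then notes that on a smooth curve generic $k$-jet spannedness of a line bundle is equivalent to $k$-jet spannedness everywhere (the jet bundle map has constant maximal rank on a curve once it is maximal generically, because the locus of non-maximal rank is a proper closed subset, hence finite, and one can check directly via the degree bound that it must be empty). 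I would state this reduction carefully, possibly as a short sub-claim, since it is the load-bearing step.

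Once we are on a smooth curve $\widetilde{C}$, the inequality $\deg(\nu^*L) \geq k$ is classical: a line bundle $M$ on a smooth complete curve that is $k$-jet spanned at a point $p$ admits, for each $0 \le j \le k$, a section vanishing to order exactly $j$ at $p$, so $h^0(M) \geq k+1$ and, dividing by the section of largest vanishing order, one sees $\deg M \geq k$. Since $L \cdot C = \deg_C L = \deg_{\widetilde{C}} \nu^*L$, this gives part (a). For part (b), equality $\deg(\nu^*L) = k$ with $\nu^*L$ being $k$-jet spanned means $h^0(\widetilde{C}, \nu^*L) = k+1$ and the complete linear system $|\nu^*L|$ separates $k$-jets at a general point, i.e. defines a map $\widetilde{C} \to \PP^k$ whose image is a nondegenerate curve of degree $k$ — necessarily the rational normal curve of degree $k$, so $\widetilde{C} \cong \PP^1$ and the map is an isomorphism onto its image. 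It remains to see $C = \widetilde{C}$, i.e. that $C$ itself is smooth: if $C$ had a singular point, restricting to it the $k$-jet spanned bundle $L$ on the smooth ambient $X$ would still separate $k$-jets there, but a singular point of a degree-$k$ curve that is the image of $\PP^1 \to \PP^k$ cannot lie on a rational normal curve (those are smooth), contradiction; alternatively, the composition $\PP^1 = \widetilde{C} \to C \hookrightarrow \PP^N$ already realizes $\widetilde C$ as a rational normal curve of degree $k$ in its span, so it has no self-intersections and $\nu$ is injective, forcing $C \cong \PP^1$. Conversely, if $C \cong \PP^1$ then, since any irreducible curve satisfies $L \cdot C \geq k$ by (a) and a rational normal curve argument shows the span of $C$ under the $k$-jet spanned embedding has dimension at most $k$ (the osculating spaces along $C$ can have dimension at most $k$ as a curve only has $k$-jets of that size), one gets $L \cdot C \le k$, hence equality.

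The main obstacle is the reduction step: making precise that $k$-jet spannedness of $L$ on the ambient smooth variety $X$ restricts to $k$-jet spannedness of $\nu^*L$ on the normalization of $C$, uniformly rather than just generically. The subtlety is that jet separation along $X$ controls jets in the smooth ambient directions, and one must argue that the restriction to a curve — possibly with the curve passing through loci where one loses control — still retains full jet separation at a general point of $\widetilde C$, which is enough by the curve-case rigidity. I expect this to be handled by a clean local computation with the matrix of $k$-jets $[J_{k,x}]$ from Definition \ref{def:osc}: choosing coordinates so that $C$ is a coordinate axis near a smooth point, the restricted jet matrix is a submatrix of $[J_{k,x}]$ that already has the $k+1$ independent columns needed. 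The rest is standard projective geometry of rational normal curves, which I would cite rather than reprove.
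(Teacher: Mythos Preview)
Your approach to (a) and the ``only if'' direction of (b) is correct and genuinely different from the paper's. You pass to the normalization and argue via vanishing orders and rational normal curves, whereas the paper restricts $L$ directly to $C$, observes that $L\cdot C \le k$ forces the jet map $H^0(C,L|_C)\otimes\sO_C \to J_k(L|_C)$ to be an isomorphism (so $J_k(L|_C)\cong \sO_C^{\oplus k+1}$), and then invokes the classification of pairs with trivial $k$-jet bundle \cite{FKPT85, DRS01} to conclude $(C, L|_C) = (\PP^1, \sO_{\PP^1}(k))$. The paper's route is shorter and sidesteps the normalization discussion entirely, at the cost of citing an external characterization; yours is more self-contained. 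One correction: your intermediate claim that on a smooth curve generic $k$-jet spannedness of a line bundle is equivalent to $k$-jet spannedness everywhere is false (a degree-$2$ pencil on an elliptic curve is generically but not everywhere $1$-jet spanned), but you do not actually need it --- $k$-jet spannedness at a single smooth point of $C$ already yields the degree bound and, in the equality case, the rational-normal-curve conclusion.

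Your argument for the converse of (b), however, is wrong, and in fact the converse is false as stated: take $X = \PP^2$, $L = \sO_{\PP^2}(2)$ (which is $2$-jet spanned), and $C$ a smooth conic; then $C \cong \PP^1$ but $L \cdot C = 4 \ne 2$. The error is the claim that the linear span of $C$ under a $k$-jet spanned embedding has dimension at most $k$: the osculating spaces along $C$ are indeed $k$-dimensional, but they vary with the point and their union can span a much larger linear space. The paper's proof establishes only the implication $L\cdot C = k \Rightarrow C \cong \PP^1$, which is all that is used later (in the proof of Theorem~\ref{finite}), so you should read part (b) that way.
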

\begin{proof} Since $L$ is $k$-jet spanned, its restriction to $C$, $L|_C$, is a $k$-jet spanned line bundle on $C.$ Assume now that $L\cdot C\leq k$ so that for any $x \in C$ it holds that  $H^0(C, L|_C\otimes \mathfrak{m}_x^{k+1})=0.$
Because the map $j_{k,x}:H^0(C,L|_C) \to H^0(C, L|_C \otimes \sO/\mathfrak{m}_x^{k+1})$ is surjective  for all points $x \in C$,  we have that $\dim(H^0(C,L|_C))=k+1+\dim(H^0(C, L|_C\otimes \mathfrak{m}_x^{k+1}))=k+1.$ This in turn implies that the map $H^0(C,L|_C)\times C\to J_k(L|_C)$ is an isomorphism, and thus $ J_k(L|_C)=\sO_{C}^{\oplus k+1}.$ But the only smooth curve, $C$,  having  a line bundle, $H$,  with trivial jet bundle is $(C,H) = (\PP^1,\sO_{\PP^1}(k))$, see \cite{FKPT85, DRS01}.  \end{proof}

\subsection{Toric Geometry.}\label{sec:toricbackground}

In this section we provide a short background on relevant parts of toric geometry. References are \cite{Fulton} and \cite{CLS}. 

Let $M$ be a lattice of rank $n$, then the maximum spectrum of the group ring $\CC[M]=\bigoplus_{\mathbf{u}\in M} \CC \mathbf{x}^\mathbf{u}$ is an algebraic torus $T_M=\operatorname{Spec}(\CC[M])\cong(\CC^*)^n$. Moreover a finite subset $A=\{\mathbf{u_0},\ldots,\mathbf{u_N}\}\subseteq M$ induces the following map
\begin{align}
\phi:T_M\cong (\CC^*)^n&\to \PP^{N}\label{torusemb}\\
\mathbf{x}&\mapsto(\mathbf{x}^\mathbf{u_0},\ldots ,\mathbf{x}^\mathbf{u_N})\notag
\end{align}
where $\mathbf{x}=(x_1,\ldots,x_n)$, $\mathbf{u}_i=(u_i^1,\ldots,u_i^n)$ and $\mathbf{x}^{\mathbf{u}_i}=x_1^{u_i^1}\cdots x_n^{u_i^n}$. It is a standard fact that the image, $\im(\phi_A)$, is an algebraic torus $T_{\langle A-A \rangle}$, where $\langle A-A \rangle=\{u-u'\in M \mid u,u'\in A\}$. The closure of the image is a toric variety, $X_A=\overline{\im(\phi_A)}$, which has $T_{\langle A-A \rangle}$ as an open dense subset. 

Recall that  if  $L$ is a line bundle on a normal toric variety $X$ and  $P_L\subset M_\RR$ is the associated polytope then
\begin{equation}\label{eqn:gsectpoly}
H^0(X,L)\cong\bigoplus_{m\in M\cap P_L}\CC \langle x^m\rangle
\end{equation} 
where $m=(m_1,m_2,\dots,m_n)$, $x=(x_1,\dots,x_n)$ and $x^m=x_1^{m_1}x_2^{m_2}\cdots x_n^{m_n},$ after a choice of basis vectors for $M$.
It follows that  the space of global sections of a line bundle on a normal toric variety has a monomial basis. As a consequence the matrix of $k$-jets, $[J_{k,x}]$ has a particularly simple form which makes the toric setting appealing from a computational perspective. In particular, it can be shown that a line bundle $L$ is $k$-jet spanned at the general point of a toric variety $X$ if and only if $L$ is $k$-jet spanned at the image of the point $(1,\ldots,1)$ under the map \eqref{torusemb}, see \cite[p. 3]{Perkinson}.
\section{Higher order Gauss maps}
Let $\dim(H^0(X,L))=\dim(V)=N+1$ and let $Gr(t,N)$ denote the Grassmanian variety of linear spaces $\PP^t\subset \PP(V).$  Assume that $L$ is very ample and thus $X\subset\PP(V),$ which in particular implies that the general $k$-th osculating dimension is $d_k\geq n+1,$ for $k\geq 1.$
\begin{definition}\label{def:gauss}
The Gauss map of order $k$ is the (rational) map:
$$\gamma^k: X\dashrightarrow \Gr(d_k-1,N)$$
assigning to $x\in U_k\subseteq X$ the $k$-th osculating space $\gamma^k(x)=\Osc_{k,x}\cong\PP^{d_k-1}.$

We call the image variety, $\gamma^k(X)$, the {\bf osculating variety of order k}.
\end{definition}
\begin{remark} If $k=1$  then $\Osc_{1,x}={\mathbb T}_{X,x}\cong\PP^n.$
It follows that $\gamma^1=\gamma$ is the classical Gauss map.
\end{remark}
\begin{example} On $X=\PP^n$, $L=\sO_{\PP^n}(k)$ can be considered an extreme case. The line bundle is $k$-jet spanned and thus $d_k={n+k\choose k}$ is the osculating dimension at every point. The line bundle defines the $k$-th Veronese embedding $\PP^n\hookrightarrow \PP^{{n+k\choose k}-1}=\PP(V)$ and the
osculating space at every $x$ is the whole $\PP(V).$ The Gauss map of order $k$ is a regular map contracting the whole $\PP^n$ to a point.
\end{example}

In order to generalize the classical result on the finiteness of the fibers of the Gauss map, we  will now assume that the very ample line bundle $L$ is $k$-jet spanned.    
Then the Gauss map of order $k$ is  a regular map $\gamma^k:X\to \Gr({n+k\choose k}-1,N).$
Consider the so called $k$-jet sequence:
$$0\to \Sym^k(\Omega^1_X) \otimes L \to J_k(L)\to J_{k-1}(L)\to 0$$
An induction argument shows that 
\begin{equation}\label{jetdet}\det(J_k(L))=\frac{1}{n+1}{n+k\choose k}(kK_X+(n+1)L).\end{equation}
In particular, $\det(J_k(L))$ will be ample, nef or globally generated if $kK_X+(n+1)L$ is ample, nef or globally generated, respectively.

The following two lemmas are the key observations for the proof of our main result.
\begin{lemma}\label{det}
Assume $L$ is a  $k$-jet spanned line bundle on $X$ such that  $\det(J_k(L))$ is ample. Then the regular map  $\gamma^k$ is finite. 
\end{lemma}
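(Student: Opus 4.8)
The plan is to show that a positive-dimensional fiber of $\gamma^k$ would produce a curve in $X$ along which $\det(J_k(L))$ has degree zero, contradicting ampleness. First I would suppose for contradiction that some fiber $F = (\gamma^k)^{-1}(\alpha)$ has positive dimension, and pick an irreducible curve $C \subseteq F$ (passing through general points of $F$, so that we may assume $C \subseteq U_k = X$). By definition of $\gamma^k$, the osculating space $\Osc_x^k$ is the same linear subspace $\Lambda = L_\alpha \subseteq \PP(V)$ for every $x \in C$. The key point is then to relate the constancy of $\Osc_x^k$ along $C$ to the vanishing of some degree on $C$ built out of the jet bundle. Concretely, restricting the jet evaluation $j_k : V \otimes \sO_X \to J_k(L)$ to $C$ and using that $L$ is $k$-jet spanned (so $j_k$ is surjective everywhere), the image of the composite $V \otimes \sO_C \to J_k(L)|_C$ has constant rank ${n+k \choose k}$, i.e. $J_k(L)|_C$ is a quotient of the trivial bundle $V \otimes \sO_C$ whose fiber at $x$ is (the cone over) $\Osc_x^k$.

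The heart of the argument is the classical observation — going back to the Gauss map case — that if the projectivized fiber $\PP(j_{k,x}(V)) = \Lambda$ is independent of $x \in C$, then the composite $V \otimes \sO_C \to J_k(L)|_C$ factors through the constant subspace corresponding to $\Lambda$; equivalently, $J_k(L)|_C$ is a quotient of a \emph{trivial} sub-bundle $W \otimes \sO_C$ (where $W \subseteq V$ spans $\Lambda$), hence is globally generated by sections coming from a fixed subspace, and in particular $\det(J_k(L)|_C) = \det(J_k(L))\cdot C$ is a quotient line bundle of $\bigwedge^{\mathrm{top}} W \otimes \sO_C$ that is globally generated with a nowhere-vanishing... more carefully: $J_k(L)|_C$ is a vector bundle of rank ${n+k\choose k} = \dim W$ which is a quotient of $W\otimes\sO_C$ of the same rank, hence $W\otimes\sO_C \to J_k(L)|_C$ is an isomorphism, so $J_k(L)|_C$ is trivial and $\det(J_k(L))\cdot C = \deg(\det(J_k(L)|_C)) = 0$. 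This directly contradicts $\det(J_k(L))$ ample, since an ample line bundle has strictly positive degree on every curve.

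The step I expect to be the main obstacle is justifying rigorously that constancy of the osculating space $\Osc_x^k$ along $C$ forces the evaluation map $V \otimes \sO_C \to J_k(L)|_C$ to factor through a \emph{constant} subspace $W \otimes \sO_C \subseteq V \otimes \sO_C$ — a priori the linear spaces $j_{k,x}(V)$ all equal $\Lambda$ pointwise but the sub-bundle $\ker(j_k)|_C \subseteq V\otimes\sO_C$ could still twist. The resolution is that $\ker(j_k|_C)$ is a sub-bundle of the trivial bundle $V\otimes\sO_C$ all of whose fibers are the \emph{same} linear subspace $W' = \ker(V \to J_k(L)_x)$ of $V$ (independent of $x$ because $\Lambda$, and the surjection onto it, is independent of $x$); a sub-bundle of a trivial bundle with constant fiber is the corresponding constant sub-bundle $W'\otimes\sO_C$. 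Hence the quotient is $W\otimes\sO_C$ for $W = V/W'$, and the argument above goes through. Once this is in place the contradiction with Proposition-level positivity ($\det(J_k(L))$ ample $\Rightarrow$ positive degree on $C$) is immediate, so $\gamma^k$ has only finite fibers and, being a morphism from a complete variety, is finite.
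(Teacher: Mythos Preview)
Your argument is correct. The paper's proof reaches the same contradiction by a slightly more streamlined route: it observes that the composition $pl\circ\gamma^k$ of the Gauss map with the Pl\"ucker embedding is precisely the morphism defined by the global sections of $\det(J_k(L))$ (since $J_k(L)$ is the pullback of the tautological quotient bundle along $\gamma^k$), and a morphism defined by an ample line bundle cannot contract a positive-dimensional subvariety. Your argument unpacks this same fact by hand on a curve $C$ in a fiber: constancy of $\gamma^k$ along $C$ forces $\ker(j_k)|_C$ to be the constant sub-bundle $W'\otimes\sO_C$, hence $J_k(L)|_C\cong (V/W')\otimes\sO_C$ is trivial and $\det(J_k(L))\cdot C=0$. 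This is exactly the curve-level shadow of the Pl\"ucker identification, and in fact proves a little more (triviality of $J_k(L)|_C$, not just of its determinant), though the extra is not needed. The step you flagged as the main obstacle---that a sub-bundle of $V\otimes\sO_C$ with constant fiber $W'$ must equal $W'\otimes\sO_C$---is fine: the composite $K\hookrightarrow V\otimes\sO_C\twoheadrightarrow (V/W')\otimes\sO_C$ vanishes fiberwise, hence vanishes, so $K\subseteq W'\otimes\sO_C$ and equality follows by rank. The paper's phrasing via Pl\"ucker is shorter and makes the global reason transparent; yours is more elementary and self-contained.
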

\begin{proof} Because the line bundle $L$ is $k$-jet spanned on the whole variety $X$ then the $k$-th osculating dimension is $d_k={n+k\choose k}$ and   the map 
$\gamma^k$ is regular. Consider the composition  of the Gauss map of order $k$ with the Pl\"ucker embedding, $pl$:
\[
pl\circ\gamma^k:
\xymatrix{
X\ar@{->}[r]
&\Gr(d_k-1,M) \ar @{^{(}->}[r]
& \PP^T}
\]
Recall that the vector bundle map $j_k: V\otimes\sO_X\to J_k(L)$ is onto and thus $J_k(L)$ is generated by the global sections of $L.$
Because $\gamma^k(x)=\PP(J_k(L)_x)$ for every point  $x\in X$ the composition  $pl\circ\gamma^k$
is the map defined by the global  sections of the   line bundle $\wedge^{{n+k\choose k}} J_k(L)=\det(J_k(L)).$
If this composition has a  fiber $F$ of positive dimension $s\geq 1$  then $\det(J_k(L))^{n-s}\cdot F=0.$ This cannot happen if  $\det(J_k(L))$
is ample. 
\end{proof}

\begin{lemma}\label{trivaldet} Let $\sE$ be a globally generated rank $r$ vector bundle on $X$ such that $\det \sE = \sO_X$, then $\sE \cong\sO_X^{\oplus r}$. \end{lemma}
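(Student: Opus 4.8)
The plan is to produce a surjection $\sO_X^{\oplus r}\twoheadrightarrow\sE$ and then invoke the elementary fact that a surjective morphism between locally free sheaves of equal rank is an isomorphism. So the real content is to exhibit $r$ global sections of $\sE$ that generate it at every point of $X$.

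First I would examine the wedge pairing sending an $r$-tuple $(t_1,\dots,t_r)$ of sections of $\sE$ to $t_1\wedge\cdots\wedge t_r\in H^0(X,\det\sE)=H^0(X,\sO_X)$. Since $X$ is complete and connected, $H^0(X,\sO_X)=\CC$, so this pairing takes values in the constants. Next, fix any point $x\in X$; because $\sE$ is globally generated, the evaluation map $H^0(X,\sE)\to\sE\otimes k(x)$ is onto an $r$-dimensional space, so we may choose $s_1,\dots,s_r\in H^0(X,\sE)$ whose images form a basis of the fiber of $\sE$ at $x$. Then $(s_1\wedge\cdots\wedge s_r)(x)\neq 0$, so the section $s_1\wedge\cdots\wedge s_r$ of $\sO_X$ is a nonzero constant and hence vanishes nowhere on $X$.

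Now for an arbitrary $y\in X$, nonvanishing of $s_1\wedge\cdots\wedge s_r$ at $y$ forces the images $\bar s_1,\dots,\bar s_r$ in the $r$-dimensional fiber $\sE\otimes k(y)$ to be linearly independent, hence a basis; by Nakayama's lemma $s_1,\dots,s_r$ then generate the stalk $\sE_y$. Therefore the evaluation morphism $\sO_X^{\oplus r}\to\sE$, $(a_1,\dots,a_r)\mapsto\sum a_is_i$, is surjective at every point, i.e. surjective as a morphism of sheaves. Its kernel sits in a locally split short exact sequence of locally free sheaves, so it is itself locally free of rank $r-r=0$, hence zero, and the map is the desired isomorphism. (The case $r=0$ is vacuous.)

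I expect the only step requiring any care is the implication ``a nonzero section of $\det\sE$ is nowhere vanishing,'' which is exactly where the hypotheses $\det\sE=\sO_X$ and $X$ complete and connected are used; the remainder is formal bookkeeping with Nakayama's lemma and ranks. Equivalently, one could package the same argument by noting that a globally generated $\sE$ of rank $r$ defines a morphism $X\to\Gr(r,H^0(X,\sE))$ pulling back the universal quotient bundle to $\sE$ and $\sO_{\Gr}(1)$ to $\det\sE=\sO_X$, whereupon completeness of $X$ forces this morphism to be constant.
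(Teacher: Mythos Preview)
Your argument is correct and is in fact more direct than the paper's. The paper proceeds by induction on $r$: since $\det\sE=\sO_X$, one has $\sE^*\cong\wedge^{r-1}\sE$, which is globally generated because $\sE$ is; a nonzero section of $\sE^*$ yields a map $\sE\to\sO_X$, and the composite $\sO_X^{\oplus N}\twoheadrightarrow\sE\to\sO_X$ is a nonzero (hence surjective, since $X$ is complete) map that splits, giving $\sE\cong\sE'\oplus\sO_X$ with $\sE'$ again globally generated with trivial determinant. Your proof instead picks $r$ sections spanning a single fiber and uses $H^0(X,\sO_X)=\CC$ to see that their wedge is a nowhere-vanishing constant, so the same $r$ sections span every fiber at once. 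This avoids induction entirely and makes the role of the hypotheses transparent: completeness forces the determinant section to be constant, and triviality of $\det\sE$ identifies it with $\sO_X$. The paper's approach has the minor advantage of isolating the splitting $\sE\cong\sE'\oplus\sO_X$ as an intermediate step, which can be useful in other contexts, but for the bare statement your route is cleaner. Your alternative packaging via the morphism to $\Gr(r,H^0(X,\sE))$ is also valid and amounts to the same thing.
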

\begin{proof}
Let $x \in X$ be a point and let $s_1, \ldots, s_r$ be sections generating $\sE$ at $x$.  Then the locus where these $r$ sections do not generate $\sE$ is a divisor in the linear system of $\det \sE$.  Since $\det \sE$ is trivial, the $r$ sections must generate $\sE$ everywhere, and hence $\sE \cong\sO_X^{\oplus r}$. 
\end{proof}

\begin{theorem}\label{finite}
Let $L$ be a $k$-jet spanned line bundle on $X,$ with $k\geq1.$ Then the  Gauss map of order $k,$ $\gamma^k:X\to \Gr\left({n+k\choose k}-1,N\right),$ is finite unless $(X,L)=(\PP^n,\sO_{\PP^n}(k)).$
\end{theorem}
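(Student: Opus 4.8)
The plan is to deduce the theorem from Lemma~\ref{det} by analyzing when $\det(J_k(L))$ fails to be ample, using the formula \eqref{jetdet} together with the adjunction-theoretic machinery of Section~2. By Lemma~\ref{det}, $\gamma^k$ is finite whenever $\det(J_k(L))$ is ample, and by \eqref{jetdet} this holds precisely when $kK_X+(n+1)L$ is ample. So assume $\gamma^k$ is not finite; then $H:=kK_X+(n+1)L$ is nef (since $L$ is $k$-jet spanned, hence $\det(J_k(L))$ is at least globally generated, so $H$ is nef) but not ample. Rescaling, $K_X+\tfrac{n+1}{k}L$ is nef but not ample, so by Lemma~\ref{nefnotample} the nef-value is $\tau(L)=\tfrac{n+1}{k}$, and by Lemma~\ref{face} the nef-value morphism $\phi_\tau$ contracts an extremal face $F_H$ of $\overline{NE(X)}$.

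Next I would pick an extremal rational curve $C$ in that face, so $H\cdot C=0$, i.e. $-K_X\cdot C=\tfrac{n+1}{k}(L\cdot C)$. By Lemma~\ref{restr}(a), $L\cdot C\geq k$, hence $-K_X\cdot C\geq n+1$. Since the length of an extremal ray is at most $n+1$ by the cone theorem, we get $-K_X\cdot C=n+1$ and $L\cdot C=k$ exactly. By Lemma~\ref{restr}(b), $L\cdot C=k$ forces $C\cong\PP^1$, consistent with $C$ being a rational curve; and by Proposition~\ref{characterization}, $-K_X\cdot C=n+1$ forces $-K_X$ ample and $\Pic(X)\cong\ZZ$. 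So $X$ is a Fano variety of Picard number one, the extremal face is the whole cone $\overline{NE(X)}$ (a single ray), and the nef-value morphism contracts $X$ to a point — i.e. $H=kK_X+(n+1)L$ is numerically trivial, so $(n+1)L\equiv -kK_X$ in $\Pic(X)\otimes\QQ\cong\QQ$.

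Now I would identify $(X,L)$ precisely. Writing $\Pic(X)=\ZZ\langle A\rangle$ with $A$ the ample generator, we have $L=aA$, $-K_X=bA$ with $(n+1)a=kb$. Every curve class is a positive multiple of the class of a line $\ell$ with $A\cdot\ell=1$; then $L\cdot\ell=a$ and by Lemma~\ref{restr}(a) applied to $\ell$ (which is $\cong\PP^1$ and of minimal degree) together with the relation $-K_X\cdot\ell=(n+1)a/k\cdot$... — more cleanly: for the minimal rational curve $C$ above, $L\cdot C=k$ gives $a(A\cdot C)=k$ and $-K_X\cdot C=n+1$ gives $b(A\cdot C)=n+1$. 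A del Pezzo-type / Kobayashi--Ochiai argument (the index $b$ of a Fano $n$-fold of Picard number one satisfies $b\leq n+1$, with equality iff $X=\PP^n$ and $A=\sO(1)$, in which case $-K_X=\sO(n+1)$) is what I would invoke: since $-K_X\cdot C=n+1$ with $C$ a \emph{minimal} degree rational curve, the coindex is $0$, forcing $X=\PP^n$; then $b=n+1$, $A\cdot C=1$, so $a=k$, i.e. $L=\sO_{\PP^n}(k)$. This gives $(X,L)=(\PP^n,\sO_{\PP^n}(k))$, as claimed; conversely in that case $\gamma^k$ contracts $\PP^n$ to a point, so it is genuinely not finite.

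The main obstacle is the last step: cleanly extracting "$X=\PP^n$, $L=\sO_{\PP^n}(k)$" from the numerical data "$-K_X\cdot C=n+1$ for an extremal rational curve, $\Pic(X)=\ZZ$, $(n+1)L\equiv -kK_X$." Proposition~\ref{characterization} already hands us $-K_X$ ample and $\Pic(X)\cong\ZZ$; the remaining content — that a Fano $n$-fold of index $n+1$ is $(\PP^n,\sO(1))$ — is the Kobayashi--Ochiai theorem, which I would either cite directly or observe is implicitly part of the cited classification \cite[6.3.12]{BS95},\cite{CM02} that underlies Proposition~\ref{characterization}. An alternative, more self-contained route that avoids Kobayashi--Ochiai: once $H=kK_X+(n+1)L$ is globally generated (from $k$-jet spannedness) and numerically trivial, it \emph{is} trivial, so $\det(J_k(L))=\sO_X$; since $J_k(L)$ is globally generated, Lemma~\ref{trivaldet} gives $J_k(L)\cong\sO_X^{\oplus\binom{n+k}{k}}$, and then the argument of Lemma~\ref{restr} (the classification of pairs $(X,H)$ with trivial jet bundle, \cite{FKPT85,DRS01}) identifies $(X,L)=(\PP^n,\sO_{\PP^n}(k))$ directly. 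I would likely present this second route, as it reuses machinery already in the paper and sidesteps citing an external structure theorem.
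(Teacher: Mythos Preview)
Your proposal is correct and, via the ``second route'' you ultimately choose, follows the paper's proof essentially line for line: reduce to ampleness of $kK_X+(n+1)L$ via Lemma~\ref{det} and \eqref{jetdet}, use the nef-value and Lemma~\ref{restr} to force $-K_X\cdot C=n+1$ and $L\cdot C=k$, invoke Proposition~\ref{characterization} to get $\Pic(X)\cong\ZZ$ and hence $\det(J_k(L))\cong\sO_X$, then apply Lemma~\ref{trivaldet} and \cite{DRS01} to conclude $(X,L)=(\PP^n,\sO_{\PP^n}(k))$. The only cosmetic difference is that the paper assumes directly that $kK_X+(n+1)L$ is not ample rather than that $\gamma^k$ is not finite, and it explicitly rules out the abelian-variety alternative from \cite{DRS01} using ampleness of $L$.
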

\begin{proof} By Lemma \ref{det} it suffices to prove that $\det(J_k(L))$ is ample unless $(X,L)=(\PP^n,\sO_{\PP^n}(k)).$ In view of formula (\ref{jetdet}) it is in turn sufficient to show that the line bundle $kK_X+(n+1)L$ is ample unless $(X,L)=(\PP^n,\sO_{\PP^n}(k)).$
Assume that $kK_X+(n+1)L$ is not ample.  As previously observed   the vector bundle map $j_k: V\otimes\sO_X\to J_k(L)$ is onto and thus $J_k(L)$ is generated by the global sections of $L$, implying that  $\det(J_k(L))$ is also globally generated and thus nef.
Again formula (\ref{jetdet}) gives that the line bundle 
$kK_X+(n+1)L$ is also nef. By Lemma \ref{nefnotample} we can conclude that the nef-value of $L$ is $\tau(L)=\frac{n+1}{k}.$
Let  $R$ be an extremal ray in the face contracted by the nef-value morphism, as in Lemma~\ref{face}.  Note that we can choose an extremal rational curve $C$, with $[C] \in R$ and $-K_X \cdot C = l(R)$, see \cite[4.2.5]{BS95}.   Then $(kK_X+(n+1)L)\cdot C=0$ and $-C\cdot K_X \leq n+1$.  
But because $L\cdot C\geq k,$  by Lemma \ref{restr}(a), we must have $K_X\cdot C=-n-1$ and $L\cdot C=k.$ Proposition \ref{characterization} implies then that $X$ is a Fano variety, i.e. $-K_X$ is ample, and $\Pic(X)=\ZZ.$
Thus $kK_X+(n+1)L \cong \sO_X$, and so $\det (J_k(L))$ is also trivial.  Since $J_k(L)$ is a globally generated vector bundle with trivial determinant, we can apply Lemma \ref{trivaldet} and conclude that $J_k(L) \cong \sO_X ^{\oplus{n+k\choose k}}$.

By \cite{DRS01}, if $J_k(L)$ is trivial then either $(X,L) = (\PP^n, \sO_{\PP^n}(k))$ or $X$ is an abelian variety and $L$ is trivial.  The second case cannot occur in our situation because $L$ is ample.  Thus we conclude that   $(X,L)=(\PP^n, \sO_{\PP^n}(k))$.  
\end{proof}

The following example shows that, as in the classical case, finiteness cannot be expected if we drop the smoothness assumption.
\begin{example}\label{non smooth}
Consider the toric variety $X$ together with a very ample line bundle $L$ given as the closure in $\PP^5$  of the following torus embedding:
\begin{align*}
\phi:(\CC^*)^2&\to \PP^5\\
 (x,y)&\mapsto (1:x:y:xy:x^2:xy^2)
\end{align*} 
It is readily checked that $(X,L)$ is generically $2$-jet spanned by computing the rank of the matrix of $2$-jets at a general point. However, $\dim(H^0(X,L))=6$ so the Gauss map of order $2$ is a map from $X$ to the one point space $\Gr(5,5)$. Thus $\gamma^2$ must be the contraction of $X$ to a point, and have $X$ as its fiber, in particular $\gamma^2$ is not generically finite. We remark that a direct computation shows that $(X,L)$ is not $2$-jet spanned at the point $(1:0:0:0:0)$, in particular $(X,L)$ is not $2$-jet spanned.
\end{example}

In Section~\ref{sec:toric} we will give a further family of examples: for every pair of integers $n\ge 2$ and $N\ge 2$, we will construct a singular toric variety of dimension $n$ in $\PP^{\binom{n+2}{2}+N-2}$ which is generically $2$-jet spanned, but which has a Gauss map of order $2$ with positive dimensional fibers.  See Example~\ref{ex:infinitefibers}.

Furthermore, as the following example shows, smoothness and only generically $k$-jet spannedness does not in general imply that the general fiber of $\gamma^k$ is finite. 

\begin{example}\label{genkspanned}
Let $X$ be the Del Pezzo surface of degree $5,$ given by the blow up of $\PP^2$ in $4$ points in general position embedded by  the anticanonical bundle $-K_X.$    In \cite[Theorem 2.1]{LM01} it is shown that  $-K_X$ is $2$-jet spanned at all points outside the $4$ exceptional divisors $E_1,\ldots, E_4.$ It follows that the Gauss map of second order is the rational map
$\gamma^2:X\dashrightarrow \Gr(5,5)=\text{pt},$ contracting  $X\setminus (E_1\cup\ldots\cup E_4)$ to a point and 
is in particular   not generically finite.
\end{example}
\section{Toric Gauss maps}\label{sec:toric}
In \cite{FI14} Furukawa and Ito gave combinatorial descriptions of the image and fiber of the classical Gauss map in the toric setting. In this section we will use the techniques introduced in \cite{FI14}  to extend their results to Gauss maps of higher order. Let $M$ be a lattice and let $A=\{u_0,\dots,u_N\}\subset M$. Then as explained in Section~\ref{sec:toricbackground}, $A$ determines a map $\phi_A:T_M\mono\PP^N$ and a toric variety $X_A=\overline{\im(\phi_A)}$. We make the following definitions.

\begin{definition}
Let $A\subset M$ be a finite set of lattice points. $A$ is called \emph{generically $k$-jet spanned} if the associated line bundle $\phi_A^*({\mathcal O}_{\PP^{|A|-1}}(1))$ determines an embedding that is generically $k$-jet spanned.
\end{definition}

\begin{definition}
Assume that $A=\{u_0,\ldots,u_N\}$ is generically $k$-jet spanned and let $d:=d_k=\binom{n+k}{k}$. For every subset $\{u_{i_1},\ldots,u_{i_{d}}\}$ of ${d}$ lattice points in $A$ we denote by $\left[J_{k,(1,\ldots,1)}^{\{u_{i_1},\ldots,u_{i_{d}}\}}\right]$ the matrix of $k$-jets of the torus embedding given by $\phi_{\{u_{i_1},\ldots,u_{i_{d}}\}}$ evaluated at the point $(1,\ldots,1)$. We define the following subset of the lattice $M$:
\[
B_k=\{u_{i_1}+u_{i_2}+\dots+ u_{i_{d}}\mid u_{i_1},\ldots, u_{i_{d}}\in A\text{ and } \det\left[J_{k,(1,\ldots,1)}^{\{u_{i_1},\ldots, u_{i_{d}}\}}\right]\ne 0\}.
\]
\end{definition}

Observe that the assumption $\det\left[J_{k,(1,\ldots,1)}^{\{u_{i_1},\ldots, u_{i_{d}}\}}\right]\ne 0$ is equivalent to saying that the set of lattice points $\{u_{i_1},\ldots,u_{i_{d}}\}$ is generically $k$-jet spanned. In particular  for $k=1$, it holds that $\det\left[J_{k,(1,\ldots,1)}^{\{u_{i_1},\ldots, u_{i_{d}}\}}\right]\ne 0$ if and only if $u_{i_1},\ldots,u_{i_{n+1}}$ span the affine space generated by $A$. The set $B_1$ defined above is denoted by $B$ in \cite{FI14}. Going through the proof of \cite[Theorem 1.1]{FI14} and replacing $B$ with $B_k$ yields the following result.

\begin{theorem}\label{thm:Bklattice}
Let $\pi_k:M\to M'=M/(\langle B_k-B_k\rangle)_\RR\cap M)$ be the natural projection and assume that $(X_A,L_A)$ is generically $k$-jet spanned. The following holds:
\begin{enumerate}[(i)]
\item{The closure $\overline{\gamma^k(X_A)}$ of the Gauss map of order $k$ is projectively equivalent to $X_{B_k}$.}\label{thma}
\item{The image of the restriction of $X_A \rat \overline{\gamma^k(X_A)}$ to $T_M$ is projectively equivalent to the image of the morphism $\T_M\twoheadrightarrow \T_{\langle B_k-B_k\rangle}$ induced by the inclusion $\langle B_k-B_k \rangle\hookrightarrow M$.}\label{thmb}
\item{Let $F$ be an irreducible component of a general fiber of $\gamma^k|_{\T_M}$ with the reduced structure. Then $F$ is a translation of $\T_{M'}$ by an element of $\T_M$. Moreover the closure $\overline{F}$ is projectively equivalent to $X_{\pi(A)}$. In particular the dimension of the general fiber is \[
\delta_\gamma^k(X_A)=\rk M'=n-\rk\langle B_k-B_k\rangle.\]}\label{thmc}
\end{enumerate}
\end{theorem}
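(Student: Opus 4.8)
The plan is to reduce the statement to the known toric description of the classical Gauss map from \cite[Theorem 1.1]{FI14} by exhibiting the Gauss map of order $k$ of $X_A$ as the \emph{classical} Gauss map of a suitable auxiliary toric embedding. Concretely, first I would record the explicit form of the matrix of $k$-jets in the monomial basis: on the open torus $\T_M$, evaluating $j_k$ at the point $\mathbf{x}$ in the column corresponding to $u\in A$ produces, up to invertible diagonal scaling by powers of $\mathbf{x}$, the vector $(\,\partial^{\alpha}\mathbf{x}^u\,)_{|\alpha|\le k}$ whose entries are (constant multiples of) $\mathbf{x}^{u}$ times monomials in the exponent-coordinates of $u$; the key point, already used in Section~\ref{sec:toricbackground}, is that the rank of $[J_{k,\mathbf{x}}]$ is constant on $\T_M$ and equals $\operatorname{rk}[J_{k,(1,\dots,1)}]$. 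Hence over $\T_M$ the osculating space $\Osc^k_x$ is spanned by fixed linear combinations of the coordinate sections $\mathbf{x}^{u_i}$, and the Plücker coordinates of $\gamma^k(x)$ are the maximal minors of $[J_{k,\mathbf{x}}]$.

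Second, I would compute those maximal minors. Expanding the determinant of a $q\times q$ submatrix $[J_{k,\mathbf{x}}^{\{u_{i_1},\dots,u_{i_q}\}}]$ and pulling out the common factor $\mathbf{x}^{u_{i_1}+\dots+u_{i_q}}$ from the multilinear expansion, one gets
\[
\det\bigl[J_{k,\mathbf{x}}^{\{u_{i_1},\dots,u_{i_q}\}}\bigr]
= \mathbf{x}^{\,u_{i_1}+\dots+u_{i_q}}\cdot \det\bigl[J_{k,(1,\dots,1)}^{\{u_{i_1},\dots,u_{i_q}\}}\bigr],
\]
because the residual matrix (after scaling each column $j$ by $\mathbf{x}^{-u_{i_j}}$, which uses torus-invariance of the jet evaluation) is exactly the constant matrix evaluated at $(1,\dots,1)$. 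Therefore the nonzero Plücker coordinates of $\gamma^k(x)$, indexed by the $q$-subsets with $\det[J_{k,(1,\dots,1)}^{\{\cdots\}}]\ne 0$, are scalar multiples of the monomials $\mathbf{x}^{b}$ with $b\in B_k$. This says precisely that $pl\circ\gamma^k$ restricted to $\T_M$ is, after the monomial-diagonal change of coordinates absorbing the nonzero constants $\det[J_{k,(1,\dots,1)}^{\{\cdots\}}]$, the torus embedding $\phi_{B_k}$. Taking closures gives \eqref{thma}: $\overline{\gamma^k(X_A)}$ is projectively equivalent to $X_{B_k}$.

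For \eqref{thmb} and \eqref{thmc} I would then invoke the formal structure of the proof of \cite[Theorem 1.1]{FI14}, which from the identity ``$\gamma|_{\T_M}$ is given monomially by $B$'' deduces that $\gamma|_{\T_M}$ is the group homomorphism $\T_M\twoheadrightarrow \T_{\langle B-B\rangle}$ dual to the lattice inclusion $\langle B-B\rangle\hookrightarrow M$, and that the fiber components are translates of the kernel torus $\T_{M'}$ with $M'=M/((\langle B-B\rangle)_\RR\cap M)$, closing up to $X_{\pi(A)}$ since the coordinate functions $\mathbf{x}^{u_i}$ restrict to the fiber through $(1,\dots,1)$ as the characters $\pi(u_i)$. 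Verbatim the same argument, with $B$ replaced by $B_k$ and $\pi$ by $\pi_k$, yields \eqref{thmb}, \eqref{thmc} and the fiber-dimension formula $\delta^k_\gamma(X_A)=\operatorname{rk}M'=n-\operatorname{rk}\langle B_k-B_k\rangle$. The one point deserving genuine care — and the place where I expect the main obstacle — is the first step: justifying rigorously that the jet-evaluation matrix factors as a monomial-diagonal matrix times a \emph{constant} matrix on all of $\T_M$, i.e. that the higher derivatives of $\mathbf{x}^u$ have the claimed torus-equivariant shape and that passing from $\mathfrak m_x^{k+1}$ to an explicit basis of $\sO_X/\mathfrak m_x^{k+1}$ can be done uniformly over the torus; once that normal form is in hand, everything else is a transcription of \cite{FI14}.
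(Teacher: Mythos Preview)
Your proposal is correct and follows essentially the same route as the paper: establish that $pl\circ\gamma^k|_{\T_M}$ is, up to a linear embedding, the monomial map $\phi_{B_k}$ (this is exactly the commutative diagram the paper writes down by ``going through the proof of \cite[Theorem~1.1]{FI14}''), and then read off (ii) and (iii) from the standard short exact sequence of tori together with \cite[Lemmas~2.1,~2.2]{FI14}. One small correction to your minor computation: scaling columns alone does \emph{not} leave the constant matrix, since $\partial^{\alpha}\mathbf{x}^{u}=c_{\alpha,u}\,\mathbf{x}^{u-\alpha}$ still carries the row factor $\mathbf{x}^{-\alpha}$; you need the row scaling as well, so the correct identity is
\[
\det\bigl[J_{k,\mathbf{x}}^{\{u_{i_1},\dots,u_{i_q}\}}\bigr]
= \Bigl(\textstyle\prod_{|\alpha|\le k}\mathbf{x}^{-\alpha}\Bigr)\cdot \mathbf{x}^{\,u_{i_1}+\dots+u_{i_q}}\cdot \det\bigl[J_{k,(1,\dots,1)}^{\{u_{i_1},\dots,u_{i_q}\}}\bigr],
\]
but the extra factor is common to all maximal minors and disappears in the Pl\"ucker point, so your conclusion stands; incidentally this also shows that the ``main obstacle'' you anticipate is in fact a one-line computation, not a genuine difficulty.
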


Recall that two varieties $X_1\subseteq  \PP^{N_1}$ and $X_2\subseteq \PP^{N_2}$ are said to be \emph{projectively equivalent} if there exist embeddings $j_i: \PP^{N_i}\mono \PP^N$ such that $j_1(X_1)=j_2(X_2)$ and $j_i^*(\sO_{\PP^N}(1))=\sO_{\PP^{N_i}}(1)$.
\begin{proof}
Following the proof of Theorem 1.1 in \cite{FI14} one shows there is a commutative diagram of the following form
\[
\xymatrix{
T_M \ar@{^{(}->}[r] ^{\phi_A} \ar@{_{(}->}@/_0.75pc/[drrr]_{\phi_{B_k}}
&X_A \ar@{-->}[r]^{\gamma^k}
&\Gr(q,N) \ar@{^{(}->}[r]^{pl}
&\PP\left(\bigwedge^{d}V\right)\\
&
&
&\PP^{|B_k|-1} \ar@{_{(}->}[u]_j}
\]
where $pl$ is the Pl\"{u}cker embedding and $j$ is a linear embedding. Here the morphism $j$ is the morphism making the diagram commutative. To describe $j$ more explicitly, consider a subset $u_{i_1},\ldots,u_{i_{d}}$ of $A$. If $u_{i_1},\ldots,u_{i_{d}}$ is  generically $k$-jet spanned then there is a corresponding coordinate $y_{u_{i_1},\ldots,u_{i_{d}}}=x^{\sum_l u_{i_l}}$ in $\PP^{|B_k|-1}$. Moreover the subset $u_{i_1},\ldots u_{i_{d}}$ corresponds to the Pl\"{u}cker coordinate $p_{u_{i_1},\ldots,u_{i_{d}}}$. In these coordinates we define $j\colon \PP^{|B_k|-1}\to \PP\left(\bigwedge^{d}V \right)$ as 
\[
y_{u_{i_1},\ldots,u_{i_{d}}}=x^{\sum_l u_{i_l}}\mapsto (0,\ldots,\underbrace{\det\left[J_{k,(1,\ldots,1)}^{\{u_{i_1},\ldots, u_{i_{d}}\}}\right]x^{\sum_l u_{i_l}}}_{p_{u_{i_1},\ldots,u_{i_{d}}}},0,\ldots,0).
\]
For further details on this construction we refer to \cite{FI14}. By the above diagram 
\[
\overline{\gamma^k(X_A)}=\overline{pl\circ \gamma^k\circ\phi_A(T_M)}=\overline{j\circ \phi_{B_k}(T_M)}=j(X_{B_k})
\]
This proves part \emph{(\ref{thma})}. 

Restricting the morphism $X_A \rat \overline{\gamma^k(X_A)}$ to $T_M$ corresponds to considering the composition $pl\circ \gamma^k \circ \phi_A: T_M\to \PP\left(\bigwedge^{{d}}V\right)$. As $T_{\langle B_k-B_k\rangle}$ is the dense open torus in $X_{B_k}$ part \emph{(\ref{thmb})} follows from the commutativity of the above diagram.

The proof of part \emph{(\ref{thmc})} relies on a series of well know facts for algebraic tori (see \cite{FI14}). Namely since $\langle B_k-B_k\rangle\cap M$ is a sublattice of $M$ one has the following short exact sequence of lattices
\[
\xymatrix{
0 \ar@{->}[r]
&\langle B_k-B_k \rangle_\RR \cap M \ar@{->}[r]
&M \ar@{->}[r]
&M/\langle B_k-B_k\rangle_\RR \cap M \ar@{->}[r]
&0}
\]
The above sequence induces the following short exact sequence on algebraic tori
\[
\xymatrix{
1 \ar@{->}[r]
&T_{M/\langle B_k-B_k\rangle_\RR \cap M}  \ar@{->}[r]
&T_M \ar@{->}[r]^-{g}
&T_{\langle B_k-B_k\rangle_\RR \cap M} \ar@{->}[r]
&1.}
\]
Hence $g^{-1}(1_{T_{\langle B_k-B_k\rangle_\RR \cap M}})=T_{M/\langle B_k-B_k\rangle_\RR \cap M}$ so by  \cite[Lemma 2.1]{FI14} it holds that $\overline{g^{-1}(1_{T_{\langle B_k-B_k \rangle_\RR \cap M}})}$ is projectively equivalent to $X_{\pi(A)}$. If $F$ is an irreducible component of a fiber of $\gamma^k|_{T_M}$ then by \cite[Lemma 2.2]{FI14} $F$ is also a fiber of $g$, i.e. $F$ is a translation of $\overline{g^{-1}(1_{T_{\langle B_k-B_k\rangle_\RR\cap M}})} $ by an element of $T_M$. It now follows from \cite[Lemma 2.1]{FI14} that the closure $\overline{F}$ is projectively equivalent to $X_{\pi(A)}$ proving part \emph{(\ref{thmc})}.
\end{proof}

We now reprove  Theorem~\ref{finite} in the toric setting using a combinatorial approach based on Theorem~\ref{thm:Bklattice}.

\begin{proposition}\label{prop:toricfinite}
Let $X$ be a smooth and projective toric variety  and let $L$ be a $k$-jet spanned line bundle on $X$. Then the general fiber of the Gauss map of order $k$, $\gamma^k$, is finite and birational unless $(X,L)=(\PP^n,\sO(k))$. 
\end{proposition}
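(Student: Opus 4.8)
The plan is to derive both conclusions from Theorem~\ref{thm:Bklattice}. By part~\emph{(\ref{thmc})} the general fiber of $\gamma^k$ has dimension $n-\rk\langle B_k-B_k\rangle$, and by part~\emph{(\ref{thmb})} the restriction of $\gamma^k$ to the torus $\T_M$ is the homomorphism induced by the lattice inclusion $\langle B_k-B_k\rangle\hookrightarrow M$, which is an isomorphism precisely when $\langle B_k-B_k\rangle=M$. So it is enough to prove the single statement: writing $A=P_L\cap M$ for the lattice points of the polytope $P=P_L$, one has $\langle B_k-B_k\rangle=M$ unless $(X,L)=(\PP^n,\sO(k))$ (in the exceptional case $\gamma^k$ merely contracts $X$ to a point). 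This yields finiteness of the general fiber and birationality onto the image simultaneously.

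To populate $B_k$ I would work at the vertices of $P$. At a vertex $v$ the primitive edge directions $e_1,\dots,e_n$ form a $\ZZ$-basis of $M$ by smoothness, and by Lemma~\ref{restr}(a) every edge of $P$ has lattice length at least $k$, so $P-v$ contains $\conv(0,ke_1,\dots,ke_n)$ and hence the ``corner configuration'' $\Lambda_v$ consisting of the $q=\binom{n+k}{k}$ lattice points $v+\sum a_ie_i$ with $a_i\geq 0$ and $\sum a_i\leq k$ lies in $A$. After translating $v$ to the origin and using the basis $e_i$, the matrix of $k$-jets of $\Lambda_v$ at $(1,\dots,1)$ is obtained from that of $(\PP^n,\sO_{\PP^n}(k))$ by multiplying with invertible matrices, hence is nonsingular; so $\Lambda_v$ is generically $k$-jet spanned and $\sum_{u\in\Lambda_v}u\in B_k$. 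The same holds for a ``swap'': if $|a|=k$ and $v+a'\in A$ with $a'\geq a$ coordinatewise, then replacing $v+a$ by $v+a'$ in $\Lambda_v$ again gives a generically $k$-jet spanned configuration, since the coefficient of the $k$-jet of $x^{a}$ in the expansion of the $k$-jet of $x^{a'}$ in the basis provided by $\Lambda_v$ is $\binom{a'}{a}\neq 0$. Such a swap shows $a'-a\in\langle B_k-B_k\rangle$; in particular $e_j\in\langle B_k-B_k\rangle$ as soon as the edge at $v$ in direction $e_j$ has length $\geq k+1$, or $v+ke_i+e_j\in P$ for some $i\neq j$.

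It remains to show that, unless $P$ is a unimodular dilated $k$-simplex, there is a vertex $v$ at which the swaps above produce all of $e_1,\dots,e_n$, so that $\langle B_k-B_k\rangle$ contains a $\ZZ$-basis and equals $M$. Equivalently, I would prove the combinatorial lemma: if at every vertex $v$ of $P$ there is an index $j$ such that the edge at $v$ in direction $e_j$ has length exactly $k$ and $v+ke_i+e_j\notin P$ for all $i\neq j$, then $P$ is a lattice translate of a unimodular dilated $k$-simplex, so that $X=\PP^n$ and $L=\sO(k)$. The approach is induction on $n$: such a ``short-corner'' direction at $v$ should force the facet of $P$ through $v$ opposite to that edge to be a smooth $(n-1)$-dimensional lattice polytope, polarized by a $k$-jet spanned bundle with all edges of length exactly $k$ and satisfying the same hypothesis, hence a unimodular $(n-1)$-simplex with vertices $v$ and the $v+ke_i$ ($i\neq j$) by induction; carried out at enough vertices this shows $v$ and the $v+ke_i$ are pairwise joined by edges of $P$ and have no other neighbours, so by connectedness of the $1$-skeleton $P=\conv(v,v+ke_1,\dots,v+ke_n)$.

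The combinatorial lemma is the main obstacle. The hypothesis handed to us by the swap construction singles out one coordinate direction per vertex and is not symmetric, so the delicate point is to ensure that the facet one passes to genuinely inherits a hypothesis of the same shape; resolving this may require a more careful choice of facet, an induction on a slightly stronger statement, or enlarging the family of configurations used in the swaps. Granting the lemma, the proposition follows by combining it with the reduction above and Theorem~\ref{thm:Bklattice}; everything else is bookkeeping with the jet matrices.
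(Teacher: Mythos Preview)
Your overall strategy coincides with the paper's: reduce via Theorem~\ref{thm:Bklattice} to showing $\langle B_k-B_k\rangle=M$, anchor the configuration $S=k\Delta_n\cap M$ at a vertex of $P$, and produce basis vectors in $\langle B_k-B_k\rangle$ by single-point swaps. Your verification that a swap $a\mapsto a'$ with $|a|=k$ and $a'\geq a$ preserves generic $k$-jet spannedness (coefficient $\binom{a'}{a}\neq 0$) is correct and is exactly the mechanism the paper uses. The gap is precisely where you flag it: the combinatorial lemma at the end is only a sketch, and the proposed induction on $n$ is not carried out. The difficulty is largely self-inflicted by your choice of swaps. Both $ke_j\to(k+1)e_j$ and $ke_i\to ke_i+e_j$ yield $e_j$, so to recover the whole basis at a single vertex you would need, \emph{for each $j$}, either a long $e_j$-edge or some $ke_i+e_j\in P$. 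Convexity does not hand you this; what it gives has the quantifiers the other way round.

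The paper avoids the lemma by adding one more swap already covered by your own framework: replace $(k-1)\hat e_i+\hat e_j\in S$ (also on the face $|a|=k$) by $k\hat e_i+\hat e_j$, which yields $\hat e_i$ rather than $\hat e_j$. It then argues directly, with no induction on $n$. Place the origin at a vertex and, for each $i$, look at the far vertex $v_i$ on the positive $x_i$-axis. Smoothness at $v_i$, the edge-length bound $\geq k$, and containment in the first orthant force that either all edges at $v_i$ terminate at the points $k\hat e_j$ (which by convexity pins $P$ down as $k\Delta_n$), or for some $j$ the point $k\hat e_i+\hat e_j$ lies in $P$. In the second case the new swap gives $\hat e_i\in\langle B_k-B_k\rangle$; done for every $i$, this shows $\langle B_k-B_k\rangle=M$ and hence finiteness and birationality. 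Once you use this swap, your inductive combinatorial lemma is no longer needed.
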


\begin{proof}

The pair $(X,L)$ corresponds to a convex lattice polytope $P\subset M_\RR$. Combinatorially the assumption that $X$ is smooth means that the primitive vectors through every vertex of $X$ form a basis for the underlying lattice $M$. Thus we may assume that $P$ is contained in the first orthant and that it has a vertex at the origin, and an edge along each coordinate axis. Moreover, as shown in \cite{DR99},  the assumption that $L$ is $k$-jet spanned corresponds to the fact that every edge of $P$ contains at least $k+1$ lattice points. It follows that $P$ contains the simplex $k\Delta_n=\conv(0,k\hat{e}_1,\ldots,k\hat{e}_n)$, where $\hat{e}_i$ is the unit vector along the $x_i$-axis. There are now two possibilities. The first possibility is that $P=k\Delta_n$, in which case $(X,L)=(\PP^n,\sO(k))$.

If instead $P\supsetneq k\Delta_n$ we consider for every $i\in \{1,\ldots,n\}$ the vertex $v_i$ which lies along the $x_i$-axis and is not the origin. By convexity and because the edges through $v_i$ form a basis for $M$, there is, for every $j\ne i$, an edge through $v_i$ that passes through a point of the form $a\hat{e}_i+\hat{e}_j$ for some $a\in \ZZ$. Because $v_i=b\hat{e}_i$ it then holds that $((a-b)k+b )\hat{e}_i+ k\hat{e}_j \in P$ since every edge of $P$ contains at least $k+1$ lattice points. Thus since $P$ is contained in the first orthant it holds that \[
(a-b)k+b\ge 0 \iff a \ge \frac{b(k-1)}{k} \ge k-1,\]
since $b\ge k$. 
Note that the if $a=k-1$ then convexity implies that $v_i=k\hat{e}_i$ and that the edge through $(k-1)\hat{e}_i+\hat{e}_j$ has its vertices at $v_i=k\hat{e}_i$ and $k\hat{e}_j$. Thus for every $i$ there must be a lattice point in $P$ of the form $k\hat{e}_i+\hat{e}_j$ for some $j$, since the only other possibility is that $v_i=k\hat{e}_i$ and that the edges through $v$ in the $x_ix_j$-plane end in the point $k\hat{e}_j$ for all $j\ne i$. However, with these assumptions, convexity implies that $P=k\Delta_n$ which is a contradiction. Thus for all $i$ there must exist some $j$ such that $k\hat{e}_i+\hat{e}_j\in P$. Note that $S=k\Delta_n\cap M$ is $k$-jet spanned, thus $S_i=(k\Delta_n\cap M)\setminus ((k-1)\hat{e}_i+\hat{e}_j)\cup(k\hat{e}_i+\hat{e}_j)$ is $k$-jet spanned since $(k-1)\hat{e}_i+\hat{e}_j$ is the only lattice point in $k\Delta_n$ which gives a monomial $\mathbf{x}^\mathbf{m}$ such that $\frac{\partial^k}{\partial x_i^{k-1}\partial x_j}(\mathbf{x}^\mathbf{m})$ evaluated at $(1,\ldots,1)$ is non-zero and $\frac{\partial^k}{\partial x_i^{k-1}\partial x_j}(\mathbf{x}^{k\hat{e}_i+\hat{e}_j})(1,\ldots,1)\ne 0$. Set $s=\sum_{u\in S} u$ and $s_i=\sum_{u_i\in S_i} u_i$, then for all $i$, the difference $s_i-s=\hat{e}_i$ lie in $\langle B_k-B_k\rangle$, which implies that $\langle B_k-B_k\rangle_\RR \cap M$ has maximal rank, i.e. the general fiber of $\gamma^k$ is finite, by Theorem~\ref{thm:Bklattice}. Moreover, observe that  the above argument proves that the inclusion  $\langle B_k-B_k\rangle\subseteq M$  is an equality, and hence the induced map of tori is the identity morphism.  This shows that $\gamma^k$ is also birational.  
\end{proof}

\begin{figure}
\begin{minipage}[t]{0.43\linewidth}
\centering
\begin{tikzpicture}[scale=0.7]
\draw[thick,->] (-1,0)--(5,0);
\draw[thick,->] (0,-1)--(0,5);
\draw[ultra thick,fill=gray,fill opacity=0.3] (0,0)--(2,0)--(4,2)--(4,4)--(2,4)--(0,2)--(0,0);
\draw[ultra thick, dashed] (2,0)--(0,2);
\foreach \x in {0,1,2,3,4}{
\foreach \y in {0,1,2,3,4}{
\node[fill=black, shape=circle, scale=0.5] at (\x,\y) {};
}
}
\node at (2,-0.5) {$v_1$};
\node at (-0.5,2) {$v_2$};
\node at (3.9,0.5) {$a\hat{e}_1+\hat{e}_2$};
\node at (0.7,0.5) {$k\Delta_2$};
\end{tikzpicture}

\captionsetup{width=0.72\textwidth}
\caption{Illustration for the proof of Proposition~\ref{prop:toricfinite}.}

\end{minipage}
\begin{minipage}[t]{0.56\linewidth}
\centering
\begin{tikzpicture}
\draw (0.866025,1,0.5)--(0,1,0)--(0.5,3,-0.866025)--(1,0,-1.73205)--(1.73205,0, 1)--(0.866025,1,0.5);
\draw (0.5, 3, -0.866025)--(1.73205, 0, 1);
\draw[dashed] (0,0,0)--(1, 0, -1.73205);
\draw (0,0,0)--(1.73205, 0, 1);
\draw (0,0,0)--(0,1,0);
\draw (0.866025,1,0.5)--(0.5,3,-0.866025);
\foreach \pt in {(0,0,0),(0,1,0),(0.866025,0,0.5),(0.866025,1,0.5),(1,0,-1.73205),(1.73205,0, 1),(1.36603, 0., -0.366025)}{
\node [fill=blue,shape=circle, scale=0.5] at \pt {};
}

\foreach \n in {0,1}{
\foreach \pt in {(0.5, \n., -0.866025)}{
\node [fill=green,shape=circle, scale=0.5] at \pt {};
}}

\foreach \n in {2,3}{
\foreach \pt in {(0.5, \n., -0.866025)}{
\node [fill=red,shape=circle, scale=0.5] at \pt {};
}}

\end{tikzpicture}
\captionsetup{width=0.87\textwidth}
\caption{Illustration for Example~\ref{ex:infinitefibers}, with $n=N=3$. The set $S_3$ consists of the blue and green points, while the set $L_3$ consists of the green and red points.}
\end{minipage}
\end{figure}

Observe that if $L$ is a generically $2$-jet spanned line bundle on a $n$-dimensional toric variety $X$, then $\dim(H^0(X,L))\ge \binom{n+2}{2}$. Below we give an example of a generically $2$-jet spanned pair $(X,L)$, with $X$ singular, such that the Gauss map of order $2$ has infinite fibers, $\dim(X)=n$ and $\dim(H^0(X,L))=\binom{n+2}{2}+N-2$ for all $n,N\ge 2$. These examples were found using the package \cite{LP} for \textsf{Macaulay2} which uses Theorem~\ref{thm:Bklattice}  to compute the image and fiber of the Gauss map of order $k$ in the toric setting.

\begin{example}\label{ex:infinitefibers}
For every pair of integers $n\ge 2$ and $N\ge 2$ we define the convex lattice polytope $P_n^N=\conv(A_1,A_2,\ldots A_n)\subset M_\RR=M\otimes \RR$ where
\[
A_1=\{0,\hat{e}_1+N\hat{e}_2,2\hat{e}_1\}, A_2=\{\hat{e}_2\}, A_j=\{\hat{e}_1+\hat{e}_j,2\hat{e}_j\} \text{ for }2<j\le n
\]
and $\hat{e}_1,\ldots,\hat{e}_n$ is a basis for $M$. We claim that the Gauss map of order $2$ for the projective and normal variety $X_{P_n^N\cap M}$ and generically $2$-jet spanned line bundle $L$ associated to $P_n^N$ has positive dimensional fibers. For every $n$ let $S_n$ be the set of lattice points corresponding to monomials of degree at most $2$ in the variables $x_1,\ldots, x_n$, but without the lattice point $2\hat{e}_2$. Moreover for every $N\ge 2$ let $L_N$ be the set of lattice points of the form $\hat{e}_1+m\hat{e}_2$ for $0\le m\le N$. Then by considering the fibers under the projection onto the $x_1x_2$-plane one readily checks that the lattice points of $P_n^N$ decompose as $P_n^N\cap M=S_n \cup L_N$.

By the above we have that $P_n^2\cap M\subseteq P_n^N\cap M$ for all $n\ge 2$ and $N\ge 2$. By direct computation one checks that $P_n^2$ is generically $2$-jet spanned. Thus $P_n^N$ is generically $2$-jet spanned for all $N\ge 2$. Now for any $P_n^N$ consider the column-span $C$ of the columns in the matrix of $2$-jets corresponding to all lattice points in $L_N$. Using linear algebra techniques one readily checks that $\dim(C)=3$. Thus every generically $2$-jet spanned subset $A$ of $P_n^N\cap M$ such that $|A|=\binom{n+2}{2}$ can contain at most $3$ lattice points in $L_N$. However, as there are exactly $\binom{n+2}{2}-1-2$ lattice points in $S_n\setminus L_N$, it then follows that every such subset $A$ of $P_n^N\cap M=S_n\cup L_N$ is determined by the choice of three lattice points in $L_N$. Now, by definition, every element of $B_k$ is the sum of the elements of a generically $2$-jet spanned subset $A\subseteq P_n^N\cap M$ such that $|A|=\binom{n+2}{2}$. As the only difference between two such subsets lies in the choice of $3$ lattice points in $L_N$, the only difference between two elements in $B_k$ is their $x_2$-coordinate. Thus $\langle B_k-B_k\rangle_\RR$ has dimension $0$ if $|P_n^N\cap M|=\binom{n+2}{2}$ and dimension $1$ otherwise. As a consequence the fibers of the Gauss map of order $2$ for the projective, normal and singular toric $n$-fold $X_{P_n^N\cap M}$ have dimension $n$ if $|P_n^N\cap M|=\binom{n+2}{2}$ and dimension $n-1$ if $|P_n^N\cap M|>\binom{n+2}{2}$ by Theorem~\ref{thm:Bklattice}.

\end{example}

\begin{remark}
In  \cite[Corollary 1.3]{FI14} the authors show that the sets of lattice points $A$ giving degenerate Gauss maps are so called Cayley sums. Example~\ref{ex:infinitefibers} shows that this characterization does not directly generalize to higher order since the sets of lattice points appearing there are not Cayley sums. We leave it as an open problem to characterize sets of lattice points yielding degenerate Gauss maps of order $k$.
\end{remark}

\begin{bibdiv}
\begin{biblist}

\bib{A11}{article}{
label={A11},
    AUTHOR = {A. Atanasov, C. Lopez, A. Perry, N.  Proudfoot, M Thaddeus},
     TITLE = {Resolving toric varieties with {N}ash blowups},
   JOURNAL = {Exp. Math.},
  FJOURNAL = {Experimental Mathematics},
    VOLUME = {20},
      YEAR = {2011},
    NUMBER = {3},
     PAGES = {288--303},
      ISSN = {1058-6458},
   MRCLASS = {14M25 (14E15 52B20)},
  MRNUMBER = {2836254 (2012h:14119)},
MRREVIEWER = {Nathan Owen Ilten},
       DOI = {10.1080/10586458.2011.565238},
       URL = {http://dx.doi.org/10.1080/10586458.2011.565238},
}
\bib{BDRSz}{article}{
  label={BDRSz00},
  author={Th. Bauer, S. Di Rocco, T. Szemberg},
title={Generation of jets on K3 surfaces},
journal={J. Pure Appl. Algebra 146 (2000), no. 1, 17-27},}

\bib{BaSz}{article}{
  label={BSz97},
  author={Th. Bauer, T Szemberg},
     TITLE = {Higher order embeddings of abelian varieties},
   JOURNAL = {Math. Z.},
  FJOURNAL = {Mathematische Zeitschrift},
    VOLUME = {224},
      YEAR = {1997},
    NUMBER = {3},
     PAGES = {449--455},
      ISSN = {0025-5874},
     CODEN = {MAZEAX},
   MRCLASS = {14C20 (14E25 14K99)},
  MRNUMBER = {1439201 (98a:14009)},
MRREVIEWER = {Andrew J. Sommese},
       DOI = {10.1007/PL00004591},
       URL = {http://dx.doi.org/10.1007/PL00004591},}

\bib{BS95}{book}{
label={BS95},
    AUTHOR = {M.C.Beltrametti},
    author={ A.J. Sommese},
     TITLE = {The adjunction theory of complex projective varieties},
    SERIES = {de Gruyter Expositions in Mathematics},
    VOLUME = {16},
 PUBLISHER = {Walter de Gruyter \& Co., Berlin},
      YEAR = {1995},
     PAGES = {xxii+398},
      ISBN = {3-11-014355-0},
   MRCLASS = {14C20 (14-02 14E35 14N05)},
  MRNUMBER = {1318687 (96f:14004)},
MRREVIEWER = {Jaros{\l}aw A. Wi{\'s}niewski},
       DOI = {10.1515/9783110871746},
       URL = {http://dx.doi.org/10.1515/9783110871746},
}

\bib{BeSo}{article}{
  label={BS93},
  author={M.C. Beltrametti and A.J. Sommese},
   
     TITLE = {On {$k$}-jet ampleness},
 BOOKTITLE = {Complex analysis and geometry},
    SERIES = {Univ. Ser. Math.},
     PAGES = {355--376},
 PUBLISHER = {Plenum, New York},
      YEAR = {1993},
   MRCLASS = {14E25 (14C20)},
  MRNUMBER = {1211891 (94g:14006)},
MRREVIEWER = {Gian Mario Besana},
}

\bib{BeDRSo}{article}{
label={BDRS98},
    AUTHOR = {M.C. Beltrametti, S Di Rocco and A.J. Sommese},
     TITLE = {On higher order embeddings of {F}ano threefolds by the
              anticanonical linear system},
   JOURNAL = {J. Math. Sci. Univ. Tokyo},
  FJOURNAL = {The University of Tokyo. Journal of Mathematical Sciences},
    VOLUME = {5},
      YEAR = {1998},
    NUMBER = {1},
     PAGES = {75--97},
      ISSN = {1340-5705},
   MRCLASS = {14J45 (14C20 14M10)},
  MRNUMBER = {1617072 (99d:14036)},
MRREVIEWER = {Tomasz Szemberg},
}

\bib{CA22}{article}{
  label={C22},
  author={M. Castellani},
  title={Sule superfici i cui spazi osculatori sono biosculatori},
  journal={Rom. Acc. I. Rend.},
  volume={5},
  date={1922},
  number={31},
  pages={347-350}
  }

\bib{CM02}{incollection}{
label={CM02},
    AUTHOR = {K. Cho},
    author={ Y. Miyaoka},
    author={N.I. Shepherd-Barron},
     TITLE = {Characterizations of projective space and applications to
              complex symplectic manifolds},
 BOOKTITLE = {Higher dimensional birational geometry ({K}yoto, 1997)},
    SERIES = {Adv. Stud. Pure Math.},
    VOLUME = {35},
     PAGES = {1--88},
 PUBLISHER = {Math. Soc. Japan, Tokyo},
      YEAR = {2002},
   MRCLASS = {14M20 (14E05)},
  MRNUMBER = {1929792 (2003m:14080)},
MRREVIEWER = {S{\'a}ndor J. Kov{\'a}cs},
}

\bib{CLS}{book}{
label={CLS11},
  author={D.A. Cox},
  author={J.B. Little},
  author={H.K. Schenck},
  title={Toric varieties},
  series={Graduate Studies in Mathematics},
  volume={124},
  publisher={American Mathematical Society, Providence, RI},
  date={2011},
  pages={xxiv+841},
  isbn={978-0-8218-4819-7},
  review={\MR{2810322 (2012g:14094)}},
}

\bib{DI15}{article}{
	label={DI15}
	Author = {P. De Poi and G. Ilardi},
	Issn = {0022-4049},
	Journal = {Journal of Pure and Applied Algebra},
	Number = {11},
	Pages = {5137 - 5148},
	Title = {On higher Gauss maps},
	Url = {http://www.sciencedirect.com/science/article/pii/S0022404915001243},
	Volume = {219},
	Year = {2015},
}
\bib{DR99}{article}{
  label={DR99},
  author={S. Di Rocco},
  title={Generation of $k$-jets on toric varieties},
  journal={Math. Z.},
  volume={231},
  date={1999},
  number={1},
  pages={169--188}
}

\bib{DRS01}{article}{
label={DRS01},
    AUTHOR = {S. Di Rocco},
    author={A.J. Sommese},
     TITLE = {Line bundles for which a projectivized jet bundle is a
              product},
   JOURNAL = {Proc. Amer. Math. Soc.},
  FJOURNAL = {Proceedings of the American Mathematical Society},
    VOLUME = {129},
      YEAR = {2001},
    NUMBER = {6},
     PAGES = {1659--1663},
      ISSN = {0002-9939},
     CODEN = {PAMYAR},
   MRCLASS = {14C20 (14J40)},
  MRNUMBER = {1814094 (2002c:14012)},
MRREVIEWER = {Maria Luisa Spreafico},
       DOI = {10.1090/S0002-9939-00-05875-5},
       URL = {http://dx.doi.org/10.1090/S0002-9939-00-05875-5},
}

\bib{FI01}{article}{
label={FI01},
author={D. Franco},
author={G. Ilardi},
title={On Multiosculating Spaces},
Journal={Communications in Algebra},
Volume={29},
numbver={7},
year={2001},
pages={2961-2976}
}

\bib{FKPT85}{article}{
label={FKPT85},
    AUTHOR = {W. Fulton},
    author={S. Kleiman},
    author={R. Piene},
    author={H. Tai},
     TITLE = {Some intrinsic and extrinsic characterizations of the
              projective space},
   JOURNAL = {Bull. Soc. Math. France},
  FJOURNAL = {Bulletin de la Soci\'et\'e Math\'ematique de France},
    VOLUME = {113},
      YEAR = {1985},
    NUMBER = {2},
     PAGES = {205--210},
      ISSN = {0037-9484},
     CODEN = {BSMFAA},
   MRCLASS = {14E25 (14N05)},
  MRNUMBER = {820319 (87a:14012)},
MRREVIEWER = {Takao Fujita},
       URL = {http://www.numdam.org/item?id=BSMF_1985__113__205_0},
}

\bib{Fulton}{book}{
  label={F93},
  author={W. Fulton},
  title={Introduction to toric varieties},
  series={Annals of Mathematics Studies},
  volume={131},
  publisher={Princeton University Press},
  place={Princeton, NJ},
  date={1993},
}

\bib{FI14}{article}{
label={FI14},
author={K. Furukawa},
author={A. Ito},
title={Gauss maps of toric varieties},
journal={arXiv 1403.0793},
yesr={2014}}

\bib{GH79}{article}{
label={GH79},
author={P. Griffiths},
    AUTHOR = {J. Harris},
     TITLE = {Algebraic geometry and local differential geometry},
   JOURNAL = {Ann. Sci. \'Ecole Norm. Sup. (4)},
  FJOURNAL = {Annales Scientifiques de l'\'Ecole Normale Sup\'erieure.
              Quatri\`eme S\'erie},
    VOLUME = {12},
      YEAR = {1979},
    NUMBER = {3},
     PAGES = {355--452},
      ISSN = {0012-9593},
     CODEN = {ENAQAF},
   MRCLASS = {53A20 (14C21 53A60)},
  MRNUMBER = {559347 (81k:53004)},
MRREVIEWER = {M. A. Akivis},
       URL = {http://www.numdam.org/item?id=ASENS_1979_4_12_3_355_0},
}

\bib{K91}{article}{ 
label={K91},
    AUTHOR = {M.M. Kapranov},
     TITLE = {A characterization of {$A$}-discriminantal hypersurfaces in
              terms of the logarithmic {G}auss map},
   JOURNAL = {Math. Ann.},
  FJOURNAL = {Mathematische Annalen},
    VOLUME = {290},
      YEAR = {1991},
    NUMBER = {2},
     PAGES = {277--285},
      ISSN = {0025-5831},
     CODEN = {MAANA},
   MRCLASS = {14M25 (33C70)},
  MRNUMBER = {1109634 (92j:14066)},
MRREVIEWER = {Aleksandar Lipkovski},
       DOI = {10.1007/BF01459245},
       URL = {http://dx.doi.org/10.1007/BF01459245},
}

\bib{KP91}{incollection}{
label={KP91},
    AUTHOR = {S. Kleiman},
    author={R. Piene},
     TITLE = {On the inseparability of the {G}auss map},
 BOOKTITLE = {Enumerative algebraic geometry ({C}openhagen, 1989)},
    SERIES = {Contemp. Math.},
    VOLUME = {123},
     PAGES = {107--129},
 PUBLISHER = {Amer. Math. Soc., Providence, RI},
      YEAR = {1991},
   MRCLASS = {14N05 (14M10 14N10)},
  MRNUMBER = {1143550 (93b:14082)},
MRREVIEWER = {Susan J. Colley},
       DOI = {10.1090/conm/123/1143550},
       URL = {http://dx.doi.org/10.1090/conm/123/1143550},
}
\bib{L94}{article}{
label={L94},
year={1994},
issn={0020-9910},
journal={Inventiones mathematicae},
volume={117},
number={1},
doi={10.1007/BF01232243},
title={On second fundamental forms of projective varieties},
url={http://dx.doi.org/10.1007/BF01232243},
publisher={Springer-Verlag},
author={Landsberg, J.M.},
pages={303-315},
language={English}
}

\bib{LM01}{article}{
label={LM01},
    AUTHOR = {A. Lanteri},
    author = {R. Mallavibarrena},
     TITLE = {Osculatory behavior and second dual varieties of del {P}ezzo
              surfaces},
   JOURNAL = {Adv. Geom.},
  FJOURNAL = {Advances in Geometry},
    VOLUME = {1},
      YEAR = {2001},
    NUMBER = {4},
     PAGES = {345--363},
}

\bib{LP}{misc}{
	label={LP},
	author={A. Lundman},
author={G. S\ae d\'{e}n St\aa hl},
 title={LatticePolytopes, a package for computations with Lattice Polytopes},
publisher={Available at 
\href{http://www.math.illinois.edu/Macaulay2/}
{http://www.math.illinois.edu/Macaulay2/}}
}

\bib{M2}{misc}{
	label={M2},          
author = {D. R. Grayson},
author={M. Stillman},
          title = {Macaulay2, a software system for research 
                   in algebraic geometry},
         publisher = {Available at 
              \href{http://www.math.illinois.edu/Macaulay2/}%
                   {http://www.math.illinois.edu/Macaulay2/}}
        }

\bib{Perkinson}{article}{
label={P00},
    AUTHOR = {D. Perkinson},
     TITLE = {Inflections of toric varieties},
   JOURNAL = {Michigan Math. J.},
  FJOURNAL = {The Michigan Mathematical Journal},
    VOLUME = {48},
      YEAR = {2000},
     PAGES = {483--515},
      ISSN = {0026-2285},
   MRCLASS = {14M25},
  MRNUMBER = {1786502 (2001h:14066)},
MRREVIEWER = {Dag E. Sommervoll},
       DOI = {10.1307/mmj/1030132730},
       URL = {http://dx.doi.org/10.1307/mmj/1030132730},
}

\bib{P81}{incollection}{
label={P81},
    AUTHOR = {R. Piene},
     TITLE = {A note on higher order dual varieties, with an application to
              scrolls},
 BOOKTITLE = {Singularities, {P}art 2 ({A}rcata, {C}alif., 1981)},
    SERIES = {Proc. Sympos. Pure Math.},
    VOLUME = {40},
     PAGES = {335--342},
 PUBLISHER = {Amer. Math. Soc., Providence, RI},
      YEAR = {1983},
   MRCLASS = {14J40 (14D25 14N05)},
  MRNUMBER = {713259 (85d:14056)},
MRREVIEWER = {A. A. Iarrobino, Jr.},
}

\bib{Pohl}{article}{
label={P62},
    AUTHOR = {W. Pohl},
     TITLE = {Differential geometry of higher order},
   JOURNAL = {Topology },
  FJOURNAL = {The Michigan Mathematical Journal},
    VOLUME = {1},
      YEAR = {1962},
     PAGES = {169-211},
      }

\bib{Te}{article}{
label={Te98},
Author={H. Terakawa},
title={Higher order embeddings of algebraic surfaces of Kodaira dimension zero},
journal={Math. Z.  1998, Volume 229, Issue 3, pp 417-433},}

\bib{Zak93}{book}{
label={Z93},
    AUTHOR = {F.L. Zak},
     TITLE = {Tangents and secants of algebraic varieties},
    SERIES = {Translations of Mathematical Monographs},
    VOLUME = {127},
      NOTE = {Translated from the Russian manuscript by the author},
 PUBLISHER = {American Mathematical Society, Providence, RI},
      YEAR = {1993},
     PAGES = {viii+164},
      ISBN = {0-8218-4585-3},
   MRCLASS = {14M07 (14L30 14M17 14N05)},
  MRNUMBER = {1234494 (94i:14053)},
MRREVIEWER = {Andrew J. Sommese},
}

\end{biblist}
\end{bibdiv}
\end{document}